\def\overset#1#2{{\mathrel{\mathop {{#2}_{}}\limits^{#1}}}}
\def\underset#1#2{{\mathrel{\mathop {{}_{} {#2}}\limits_{{#1}_{}}}}}
\def\upplim_#1{\underset{#1}{\overline\lim}\;}
\def\lowlim_#1{\underset{#1}{\underline\lim}\;}
\newtheorem{claim}[equation]{\indent \rm {\it Claim}}
\newtheorem{lemma}[equation]{Lemma}
\newtheorem{theorem}[equation]{Theorem}
\newcommand{\C}{{\mathbf{C}}}
\renewcommand{\P}{{\mathbf{P}}}
\newcommand{\zero}{\mathrm{Zero}}
\newcommand{\supp}{\mathrm{Supp}\,}
\newcommand{\Z}{\mathbf{Z}}
\numberwithin{equation}{section}
\begin{document}
\title[Degeneracy and finiteness theorem for meromorphic mappings]{Degeneracy and finiteness theorems for meromorphic mappings in several complex variables} 

\author{Si Duc Quang}

\address{Department of Mathematics, Hanoi National University of Education\\
136-Xuan Thuy, Cau Giay, Hanoi, Vienam.}
\email{quangsd@hnue.edu.vn}

\def\thefootnote{\empty}
\footnotetext{
2010 Mathematics Subject Classification:
Primary 32H30, 32A22; Secondary 30D35.\\
\hskip8pt Key words and phrases: second main theorem, uniqueness problem, meromorphic mapping, truncated multiplicity.}

\begin{abstract} {In this article, we prove that there are at most two meromorphic mappings of $\C^m$ into $\P^n(\C)\ (n\geqslant 2)$ sharing $2n+2$ hyperplanes in general position regardless of multiplicity, where all zeros with multiplicities more than certain values do not need to be counted. We also show that  if three meromorphic mappings $f^1,f^2,f^3$ of $\C^m$ into $\P^n(\C)\ (n\geqslant 5)$ share $2n+1$ hyperplanes in general position with truncated multiplicity then the map $f^1\times f^2\times f^3$ is linearly degenerate.}
\end{abstract}
\maketitle

\section{Introduction}

In $1926$, R. Nevanlinna \cite{N} showed that two distinct nonconstant meromorphic functions $f$ and $g$ on the complex plane $\C$  cannot have the same inverse images for five distinct values, and that $g$ is a special type of linear fractional transformation of $f$ if they have the same inverse images counted with multiplicities for four distinct values \cite{N}. These results are usually called the five values and the values theorems of R. Nevanlinna. 
 
After that, many authors extended and improved the results of Nevanlinna to the case of meromorphic mappings into complex projective sapces. The extensions of the five values theorem are usually called the uniqueness theorems, and the extensions of the four values theorem are usually called the finiteness theorems. Here we formulate some recent results on this problem.

To state some of them, first of all we recall the following.

Let $f$ be a nonconstant meromorphic mapping of $\C^m$ into $\P^n(\C)$ and  $H$ a hyperplane in $\P^n(\C).$ Let 
$k$ be a positive integer or $k=\infty$. 
Denote by $\nu_{(f,H)}$ the map of $\C^m$ into $\Z$ whose value  $\nu_{(f,H)}(a)\ (a\in\C^m)$ is the intersection 
multiplicity of the images of $f$ and $H$ at $f(a).$ For every $z\in \C^m$, we set 
\begin{align*}
&\nu_{(f,H),\leqslant k}(z) =
\begin{cases}
0 & \text { if } \nu_{(f,H)}(z)>k,\\
\nu_{(f,H)}(z)& \text { if } \nu_{(f,H)}(z)\leqslant k,
\end{cases}\\
\text{ and }&\nu_{(f,H),\geqslant k}(z) =
\begin{cases}
0 & \text { if } \nu_{(f,H)}(z)<k,\\
\nu_{(f,H)}(z)& \text { if } \nu_{(f,H)}(z)\geqslant k,
\end{cases}
\end{align*}

Take a meromorphic mapping $f$ of $\C^m$ into $\P^n(\C)$ which is linearly nondegenerate over $\C$, a positive integer $d$ and $q$ hyperplanes $H_1,\ldots ,H_q$ of $\P^n(\C)$ in general position with
$$\dim  f^{-1}(H_i \cap H_j) \leqslant m-2 \quad (1 \leqslant i<j \leqslant q)$$
and consider the set $\mathcal {F}(f,\{H_i\}_{i=1}^q,d)$ of all linearly nondegenerate over $\C$ meromorphic maps $g: \C^m \to \P^n(\C)$ satisfying the following two conditions:

\ (a)\ $\min\ (\nu_ {(f,H_j)},d)=\min\ (\nu_{(g,H_j)},d)\quad (1\leqslant j \leqslant q),$

\ (b) \ $f(z) = g(z)$ on $\bigcup_{j=1}^{q}f^{-1}(H_j)$.\\
We see that conditions a) and b) mean the sets of all intersecting points (counted with multiplicity to level $d$) of $f$ and $g$ with each hyperplane are the same, and two mappings $f$ and $g$ agree on these sets. If $d=1$, we will say that $f$ and $g$ share $q$ hyperplanes $\{H_j\}_{j=1}^q$ regardless of multiplicity. 

\vskip0.2cm
Denote by  $\sharp\ S$ the cardinality of the set  $S.$ In 1983, L. Smiley \cite{S} proved the following uniqueness theorem.

\vskip0.2cm
\noindent
{\bf Theorem A.} {\it \ If $q=3n+2$ then $\sharp\ \mathcal {F}(f,\{H_i\}_{i=1}^q,1)=1.$}

\vskip0.2cm 
\noindent
In 1998, H. Fujimoto \cite{Fu98} proved a finiteness theorem for meromorphic mappings as follows.

\vskip0.2cm
\noindent
{\bf Theorem B.} {\it \ If $q=3n+1$ then $\sharp\ \mathcal {F}(f,\{H_i\}_{i=1}^q,2)\leqslant 2.$}

In 2009, Z. Chen-Q. Yan \cite{CY} considered the case of $2n+3$ hyperplanes and proved the following uniqueness theorem.

\vskip0.2cm
\noindent
{\bf Theorem C.} {\it \ If $q=2n+3$ then $\sharp\ \mathcal {F}(f,\{H_i\}_{i=1}^q,1)=1.$}

After that, in 2011 S. D. Quang \cite{SQ11} improved the result of Z. Chen-Q. Yan by omitting all zeros with multiplicity more than a certain number in the conditions on sharing hyperplanes of meromorphic mappings. As far as we known, there is still no uniqueness theorem for meromorphic mappings sharing less than $2n+3$ hyperplanes regardless of multiplicities. In 2011 Q. Yan-Z. Chen \cite{YC} also proved a degeneracy theorem as follows.

\vskip0.2cm
\noindent
{\bf Theorem D.} {\it \ If  $q=2n+2$ then the map $f^1\times f^2\times f^3$ of $\C^m$ into $\P^N(\C)\times \P^N(\C)\times \P^N(\C)$ is linearly degenerate for every  three maps $f^1,f^2,f^3\in\mathcal {F}(f,\{H_i\}_{i=1}^q,2)$.}

The first finiteness theorem for the case of meromorphic mappings sharing $2n+2$ hyperplanes regardless of multiplicities are given by S. D. Quang \cite{SQ12} in 2012 as follows.

\vskip0.2cm
\noindent
{\bf Theorem E.} {\it \ If $n\geqslant 2$ and $q=2n+2$ then $\sharp\ \mathcal {F}(f,\{H_i\}_{i=1}^q,1)\leqslant 2.$}

\vskip0.2cm
However we note that there is a gap in the proof of \cite[Theorem 1.1]{SQ12}. For detail, the inequality (3.26) in \cite[Lemma 3.20]{SQ12} does not holds. Hence the inequality of \cite[Lemma 3.20(ii)]{SQ12} may not hold. In order to fix this gap, we need to slightly change the estimate of this inequality by adding $N^{(1)}_{(f,H_j)}(r)$ to its right-hand side. The rest of the proof is still valid for the case where $N\geqslant 3$. In this paper, we will show a correction for \cite[Lemma 3.20]{SQ12} (see Lemma \ref{4.8} below). Also this theorem (including the case where $N=2$) will be corrected and improved (see Theorem \ref{1.2} below) by another approach.

We would also like to emphasize that in the above results, all intersecting points of the mappings and the hyperplanes are considered. It seems to us that the technique used in the proof of the above results do not work for the case where all such points with multiplicities more than a certain number are not taken to count. Our first purpose in this paper is to improve the above result by omitting all such intersecting points. In order to states the main results, we give the following definition.

Let $f$ be a linearly nondegenerate meromorphic mapping of $\C^m$ into $\P^n(\C)$ and let $H_1,\ldots ,H_q$ be $q$ hyperplanes  of $\P^n(\C)$ in general position. Let $k_1,\ldots ,k_q$ be $q$ positive integers or $+\infty$. Assume that
$$\dim \{z; \nu_{(f,H_i),\leqslant k_i}(z)\cdot\nu_{(f,H_j),\leqslant k_j}(z)> 0\} \leqslant m-2 \quad (1 \leqslant i<j \leqslant q).$$ 
Let $d$ be an integer. We consider the set $\mathcal {F}(f,\{H_i,k_i\}_{i=1}^q,d)$ of all meromorphic maps 
$g: \C^m \to \P^n(\C)$ satisfying the conditions:
\begin{itemize}
\item[(a)] $\min\ (\nu_ {(f,H_i),\leqslant k_i},d)=\min\ (\nu_{(g,H_i),\leqslant k_i},d)\quad (1\leqslant j \leqslant q),$
\item[(b)] $f(z) = g(z)$ on $\bigcup_{i=1}^{q}\{z;\nu_{(f,H_i),\leqslant k_i}(z)>0\}$.
\end{itemize}
Then we see that $\mathcal{F}(f,\{H_i\}_{i=1}^q,d)=\mathcal{F}(f,\{H_i,\infty\}_{i=1}^q,d)$

\begin{theorem}\label{1.2}
Let $f$ be a linearly nondegenerate meromorphic mapping of $\C^m$ into $\P^n(\C )$ $(n\geqslant 2)$. Let $H_1,\ldots ,H_{2n+2}$ be $2n+2$ hyperplanes of $\P^n(\C )$ in general position and let $k_1,\ldots ,k_{n+2}$ be positive integers or $+\infty$. Assume that
$$\dim \{z; \nu_{(f,H_i),\leqslant k_i}(z)\cdot\nu_{(f,H_j),\leqslant k_j}(z)> 0\} \leqslant m-2 \quad (1 \leqslant i<j \leqslant 2n+2),$$ 
$$\text{ and }  \sum_{i=1}^{2n+2}\dfrac{1}{k_i+1}< \min\left\{\dfrac{n+1}{3n^2+n},\dfrac{5n-9}{24n+12},\dfrac{n^2-1}{10n^2+8n}\right\}.$$
Then $\sharp \mathcal{F}(f,\{H_i,k_i\}_{i=1}^{2n+2},1)\leqslant 2.$
\end{theorem}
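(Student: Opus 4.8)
My plan is to argue by contradiction: suppose $\mathcal{F}:=\mathcal{F}(f,\{H_i,k_i\}_{i=1}^{2n+2},1)$ contained three distinct maps $f^1,f^2,f^3$. I would first record (by a standard preliminary argument) that each $f^\alpha$ is linearly nondegenerate, so that Cartan's Second Main Theorem is available for all three. After fixing reduced representations $f^\alpha=(f^\alpha_0:\cdots:f^\alpha_n)$ and writing $(f^\alpha,H_i)=\sum_t a_{it}f^\alpha_t$, the basic auxiliary objects, for an ordered pair $\alpha\neq\beta$ and indices $i\neq j$, are
$$h^{\alpha\beta}_i=\frac{(f^\alpha,H_i)}{(f^\beta,H_i)},\qquad P^{\alpha\beta}_{ij}=(f^\alpha,H_i)(f^\beta,H_j)-(f^\alpha,H_j)(f^\beta,H_i).$$
The criterion driving the argument is elementary: $P^{\alpha\beta}_{ij}\equiv 0$ exactly when $h^{\alpha\beta}_i\equiv h^{\alpha\beta}_j$, and if this holds for $n+1$ indices (whose hyperplanes are automatically in general position) then $(f^\alpha,H_i)=h\,(f^\beta,H_i)$ for a common meromorphic $h$ and every $i$, forcing $f^\alpha=f^\beta$. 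Thus, the three maps being pairwise distinct, for each pair $(\alpha,\beta)$ no common value would be attained $n+1$ times among the $h^{\alpha\beta}_i$, and I could always exhibit nonconstant auxiliary functions $P^{\alpha\beta}_{ij}\not\equiv 0$.

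Next I would bound these auxiliary functions. Since $P^{\alpha\beta}_{ij}$ is a combination of products of coordinate forms, its zero counting function satisfies $N_{P^{\alpha\beta}_{ij}}(r)\leqslant T(r,f^\alpha)+T(r,f^\beta)+O(1)$, while from below it collects the intersection multiplicities of both maps with $H_i$ and $H_j$. The sharing conditions (a)--(b) with $d=1$ confine the gap between these intersection contributions and the true zeros of $P^{\alpha\beta}_{ij}$ to the tail loci $\{z:\nu_{(f^\gamma,H_i)}(z)\geqslant k_i+1\}$, on which (a) imposes no matching; this is exactly what the corrected Lemma \ref{4.8} provides, its extra term $N^{(1)}_{(f,H_i)}(r)$ repairing the gap of \cite{SQ12} by accounting for the points of bounded multiplicity where the two maps agree but meet $H_i$ to different orders. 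Each tail term is small, since
$$N^{(1)}_{(f^\alpha,H_i),\geqslant k_i+1}(r)\leqslant \frac{1}{k_i+1}\,N_{(f^\alpha,H_i)}(r)\leqslant \frac{1}{k_i+1}\bigl(T(r,f^\alpha)+O(1)\bigr),$$
so the total tail error is at most $\bigl(\sum_i 1/(k_i+1)\bigr)\max_\alpha T(r,f^\alpha)$. This is the only place the hypothesis on $\sum_i 1/(k_i+1)$ would be used.

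The assembly is then bookkeeping with the Second Main Theorem. Applying Cartan's theorem to each $f^\alpha$ gives $(n+1)T(r,f^\alpha)\leqslant\sum_i N^{(n)}_{(f^\alpha,H_i)}(r)+o(T(r,f^\alpha))$; splitting each $N^{(n)}_{(f^\alpha,H_i)}$ at level $k_i$ lets me replace the bounded parts --- common to all three maps by (a) --- by a single family of counting functions, while the tails are absorbed as above. Combining the three copies of the Second Main Theorem with the lower bounds on the zeros of the nonconstant $P^{\alpha\beta}_{ij}$, and running the combinatorics of which pairs $(\alpha,\beta)$ and hyperplanes force $P^{\alpha\beta}_{ij}$ to vanish, I expect to reach three mutually covering configurations. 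In each an inequality of the shape $c_n\,T(r)\leqslant C_n\bigl(\sum_i 1/(k_i+1)\bigr)T(r)+o(T(r))$ should emerge with $T(r)=\max_\alpha T(r,f^\alpha)$, the three ratios $c_n/C_n$ being exactly $\tfrac{n+1}{3n^2+n}$, $\tfrac{5n-9}{24n+12}$ and $\tfrac{n^2-1}{10n^2+8n}$. Since $\sum_i 1/(k_i+1)$ is strictly below each of these, every case would force $T(r)\leqslant o(T(r))$, which is absurd; hence no three distinct maps exist and $\sharp\,\mathcal{F}\leqslant 2$.

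The hard part, and the source of the gap in \cite{SQ12}, is the control of the auxiliary functions under truncation. With $d=1$, condition (a) matches only the truncated-to-one multiplicities, so at a common intersection point of bounded multiplicity the forms $(f^\alpha,H_i)$ and $(f^\beta,H_i)$ may vanish to different orders; then $P^{\alpha\beta}_{ij}$ need not vanish to the naively expected order there and the crude lower bound on its zeros fails. Securing a correct but still usable lower bound --- the content of Lemma \ref{4.8} with its corrective $N^{(1)}$ term --- and then verifying that the constants $3n^2+n$, $24n+12$ and $10n^2+8n$ produced by the three configurations are small enough to be swallowed by the hypothesis on $\sum_i 1/(k_i+1)$, is where the real work lies; the rest is a systematic use of the First and Second Main Theorems.
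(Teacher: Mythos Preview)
Your outline misses the two central mechanisms of the proof, and the constants in the statement cannot be reached by the pairwise $2\times 2$ minors $P^{\alpha\beta}_{ij}$ together with Cartan's Second Main Theorem alone.

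First, the threshold $\tfrac{n+1}{3n^2+n}$ does not arise from any ``configuration'' of the $P^{\alpha\beta}_{ij}$; it is the cost of the preliminary reduction $f^1\wedge f^2\wedge f^3\equiv 0$. In the paper this is Lemma~\ref{1.1}, proved via the $3\times 3$ determinant $P=\det\bigl((f^u,H_s),(f^u,H_t),(f^u,H_l)\bigr)_{u=1}^3$ and the zero estimate of Lemma~\ref{new1}. Your plan never establishes this linear dependence; yet everything afterwards in the paper depends on it (Lemma~\ref{4.2}, Lemma~\ref{4.4}, and the whole case split). Working only with pairwise minors you cannot force the three maps to be linearly dependent, and without that the remaining analysis does not start.

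Second, the other two thresholds come from a case split on the Fujimoto--Cartan auxiliary function
\[
\Phi^\alpha_{ij}=F_1^{ij}F_2^{ij}F_3^{ij}\det\begin{pmatrix}1&1&1\\ F_1^{ji}&F_2^{ji}&F_3^{ji}\\ \mathcal D^\alpha F_1^{ji}&\mathcal D^\alpha F_2^{ji}&\mathcal D^\alpha F_3^{ji}\end{pmatrix},\qquad |\alpha|=1,
\]
which involves a derivative and is not one of your $P^{\alpha\beta}_{ij}$. One sets $P=\{\,i:\exists\,j\ne i,\ V_i\not\cong V_j,\ \Phi^\alpha_{ij}\equiv 0\ \forall\,|\alpha|\le 1\,\}$ and treats $\sharp P\ge 2$, $\sharp P=1$, $P=\emptyset$ separately. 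Lemma~\ref{4.8} is precisely the zero/pole estimate for $\Phi^\alpha_{ij}$ when it is $\not\equiv 0$ (its proof uses the logarithmic derivative lemma and a careful local analysis at the various loci $T_t,S_i,R_i,\nu_j$); it is not a statement about the $2\times 2$ minors, and the corrective term is $N(r,\nu_j)$, measuring the set where two of the three multiplicities of $(f^u,H_j)$ coincide, not the $N^{(1)}_{(f,H_i)}$ you describe. The bound $\tfrac{5n-9}{24n+12}$ comes from the case $\sharp P=1$ (combining Lemma~\ref{4.4}(ii) with Lemma~\ref{4.8}), and $\tfrac{n^2-1}{10n^2+8n}$ from the case $P=\emptyset$ (Lemma~\ref{4.8} summed over all pairs, plus a separate estimate of $\sum_i N(r,\nu_i)$ via the $P^{st}_{jl}$ and the already established $f^1\wedge f^2\wedge f^3\equiv 0$). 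None of this structure is visible in your sketch, and the phrase ``running the combinatorics of which pairs $(\alpha,\beta)$ and hyperplanes force $P^{\alpha\beta}_{ij}$ to vanish'' does not substitute for it: the $\Phi^\alpha$ device and the linear-dependence reduction are exactly what lets one push the hyperplane count down to $2n+2$.
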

Then we see that in the case $n\geqslant 2$, Theorems D and E are corollaries of Theorem \ref{1.2} when $k_1=\cdots =k_{2n+2}=+\infty$.

The last purpose of this paper is to prove a degeneracy theorem for three mappings sharing $2n+1$ hyperplanes. Namely, we will proved the following.
\begin{theorem}
Let $f$ be a linearly nondegenerate meromorphic mapping of $\C^m$ into $\P^n(\C )\ (n\geqslant 5)$. Let $H_1,\ldots ,H_{2n+1}$ be $2n+1$ hyperplanes of $\P^n(\C )$ in general position and let $k_1,\ldots ,k_{2n+1}$ be positive integers or $+\infty$ such that
$$\dim \{z; \nu_{(f,H_i),\leqslant k_i}(z)\cdot\nu_{(f,H_j),\leqslant k_j}(z)> 0\} \leqslant m-2 \quad (1 \leqslant i<j \leqslant 2n+2).$$
 If there exists a positive integer $p$ with $p\leqslant n$ and
$$\sum_{i=1}^{2n+1}\dfrac{1}{k_i+1}<\dfrac{np-3n-p}{4n^2+3np-n}.$$
then the map $f^1\times f^2\times f^3$ of $\C^m$ into $\P^n(\C)\times \P^n(\C)\times \P^n(\C)$ is linearly degenerate for every three maps $f^1,f^2,f^3\in\mathcal {F}(f,\{H_i,k_i\}_{i=1}^{2n+1},p)$
\end{theorem}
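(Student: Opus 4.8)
The plan is to argue by contradiction with the help of the trilinear determinant attached to the three maps and Cartan's Second Main Theorem with truncated counting functions. Fix $f^1,f^2,f^3\in\mathcal F(f,\{H_i,k_i\}_{i=1}^{2n+1},p)$, choose reduced representations $f^\alpha=(f^\alpha_0:\cdots:f^\alpha_n)$, write $(f^\alpha,H_i)=\sum_{s=0}^n a_{is}f^\alpha_s$ for $H_i=\{\sum_s a_{is}x_s=0\}$, and put $T(r)=T_{f^1}(r)+T_{f^2}(r)+T_{f^3}(r)$, where $T_g(r)$ is the characteristic function and $N^{[M]}_{(g,H),\leqslant k}(r)$ the counting function of $\min(\nu_{(g,H),\leqslant k},M)$. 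Suppose $f^1\times f^2\times f^3$ is linearly nondegenerate. As a first reduction I would observe that if some $f^\alpha$ is linearly degenerate, say $\sum_s c_sf^\alpha_s\equiv0$, then multiplying this relation by the coordinates of the two remaining factors already exhibits $f^1\times f^2\times f^3$ as degenerate; hence I may assume each $f^\alpha$ is linearly nondegenerate, so that Cartan's theorem applies to it.

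The central object is, for each triple $i<j<l$, the entire function
\[
D_{ijl}=\det\begin{pmatrix}(f^1,H_i)&(f^1,H_j)&(f^1,H_l)\\ (f^2,H_i)&(f^2,H_j)&(f^2,H_l)\\ (f^3,H_i)&(f^3,H_j)&(f^3,H_l)\end{pmatrix}.
\]
Expanding the determinant expresses $D_{ijl}=\sum_{s,t,u}c_{stu}f^1_sf^2_tf^3_u$ as a trilinear form in the homogeneous coordinates composed with $f^1\times f^2\times f^3$, and this form is nontrivial because $H_i,H_j,H_l$ are in general position. Consequently the nondegeneracy assumption forces $D_{ijl}\not\equiv0$ for every triple, and the proof rests on playing a lower and an upper estimate for these functions against each other. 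The upper estimate is immediate from trilinearity: $N_{D_{ijl}}(r)\leqslant T_{D_{ijl}}(r)+O(1)\leqslant T(r)+O(1)$.

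For the lower estimate I would exploit the sharing hypotheses. At a point $z_0$ lying in some $\{\nu_{(f,H_s),\leqslant k_s}>0\}$, condition (b) makes all three maps take the same value, so the three rows of the matrix are proportional at $z_0$; writing $f^\beta-\lambda_\beta f^1$ as a multiple of the local reduced equation of the divisor and using multilinearity shows that $D_{ijl}$ is divisible by the square of that equation whenever $s\notin\{i,j,l\}$, giving vanishing of order at least $2$. When instead $s\in\{i,j,l\}$ the corresponding column of the matrix vanishes at $z_0$, and condition (a) forces its three entries to vanish to order at least $\min(\nu_{(f,H_s)}(z_0),p)$; thus $D_{ijl}$ vanishes there to order at least $\min(\nu_{(f,H_s)}(z_0),p)$. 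Collecting these contributions over the divisors (which are pairwise disjoint up to a set of codimension two, by the dimension hypothesis) yields
\[
N_{D_{ijl}}(r)\geqslant\sum_{s\in\{i,j,l\}}N^{[p]}_{(f,H_s),\leqslant k_s}(r)+2\sum_{s\notin\{i,j,l\}}N^{[1]}_{(f,H_s),\leqslant k_s}(r)+o(T(r)).
\]
Summing this and the upper bound over all triples, and noting that each index lies in $\binom{2n}{2}$ of the selected columns and is omitted from $\binom{2n}{3}$ of them, produces a single master inequality bounding a positive combination of $\sum_sN^{[p]}_{(f,H_s),\leqslant k_s}(r)$ and $\sum_sN^{[1]}_{(f,H_s),\leqslant k_s}(r)$ from above by a multiple of $T(r)$.

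The last ingredient is Cartan's Second Main Theorem applied to each nondegenerate $f^\alpha$ against the $2n+1$ hyperplanes in general position, $nT_{f^\alpha}(r)\leqslant\sum_{s=1}^{2n+1}N^{[n]}_{(f^\alpha,H_s)}(r)+o(T_{f^\alpha}(r))$, into which the truncation at level $k_s$ is absorbed through $N^{[n]}_{(f^\alpha,H_s)}(r)\leqslant N^{[n]}_{(f^\alpha,H_s),\leqslant k_s}(r)+\tfrac{n}{k_s+1}T_{f^\alpha}(r)+o(T_{f^\alpha}(r))$, so that the total omission error is bounded by $n\big(\sum_s\tfrac1{k_s+1}\big)T(r)$. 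Since condition (a) makes the counting functions of $f^\alpha$ and $f$ truncated to level $p$ coincide, these Second Main Theorem inequalities turn into a lower bound for $T(r)$ in terms of the very combination of $N^{[p]}$ and $N^{[1]}$ counting functions controlled by the master inequality; comparing the two gives $A\,T(r)\leqslant B\,T(r)+Cn\big(\sum_s\tfrac1{k_s+1}\big)T(r)+o(T(r))$ with $A>B$, and since $T(r)\to\infty$ this forces $\sum_s\tfrac1{k_s+1}\geqslant(A-B)/(Cn)$. The constants are arranged so that $(A-B)/(Cn)=\frac{np-3n-p}{4n^2+3np-n}$, contradicting the hypothesis, so $f^1\times f^2\times f^3$ must be linearly degenerate. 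The main obstacle is exactly this final bookkeeping: one must convert $N^{[n]}_{(f^\alpha,H_s),\leqslant k_s}$ into the level-$p$ and level-$1$ data delivered by the determinant without wasting more than the admissible $\tfrac{1}{k_s+1}$-error, and the interaction of the three truncation levels $n$ (from the Second Main Theorem), $p$ (from the sharing) and $k_s$ (from the family) is what produces the precise constants $np-3n-p$ and $4n^2+3np-n$; the hypotheses $n\geqslant5$ and $p\leqslant n$ enter only to keep $A-B>0$.
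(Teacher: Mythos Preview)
Your overall strategy---suppose nondegeneracy, so every $3\times 3$ determinant $D_{ijl}$ is nonzero, bound $N_{D_{ijl}}$ above by $T(r)$ and below via the sharing hypotheses, average over all triples, then feed the result into the Second Main Theorem---is exactly the route the paper takes. The gap is in your lower bound at points $z_0\in\{\nu_{(f,H_s),\leqslant k_s}>0\}$ with $s\in\{i,j,l\}$. You only use that the $s$-column has all three entries divisible by the local equation to order $\min(\nu_{(f,H_s)}(z_0),p)$, and conclude that $D_{ijl}$ vanishes to that order. This throws away two things: first, the actual common order is $\min_u\nu_{(f^u,H_s),\leqslant k_s}(z_0)$, which can be strictly larger than $p$; second, after factoring this power out of the $s$-column, the three rows of the remaining matrix are \emph{still} proportional at $z_0$ by condition (b), giving an additional $+1$. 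The paper records both facts and obtains $\nu_{D_{ijl}}(z_0)\geqslant\min_u\nu_{(f^u,H_s),\leqslant k_s}(z_0)+1$ in this case (this is Case~1 of their Lemma~2.10).

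That extra precision is not cosmetic. With your weaker bound the averaged master inequality reads, per index $s$, as $3N^{[p]}_{(f,H_s),\leqslant k_s}+(4n-4)N^{[1]}_{(f,H_s),\leqslant k_s}$, whereas the paper gets $3N(r,\min_u\nu_{(f^u,H_s),\leqslant k_s})+(4n-1)N^{[1]}_{(f,H_s),\leqslant k_s}$. They then invoke the elementary pointwise inequality
\[
3\min\{a,b,c\}+(4n-1)\geqslant\frac{4n-1+3p}{2n+p}\bigl(\min\{a,n\}+\min\{b,n\}+\min\{c,n\}\bigr),
\]
valid whenever $\min\{a,p\}=\min\{b,p\}=\min\{c,p\}$ (this is where the sharing-to-level-$p$ condition is actually used), and it is this constant $\frac{4n-1+3p}{2n+p}$ that, after the Second Main Theorem, produces exactly $\frac{np-3n-p}{4n^2+3np-n}$. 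If instead you bound $3N^{[p]}+(4n-4)N^{[1]}$ against $\sum_u N^{[n]}_{(f^u,H_s),\leqslant k_s}$, the best constant you can achieve is $\frac{3p+4n-4}{3n}$, which is strictly smaller for $p<n$ and in fact fails to exceed $1$ (so gives no contradiction at all) for $n\in\{5,6,7\}$. Your final sentence asserts that ``the constants are arranged so that $(A-B)/(Cn)=\frac{np-3n-p}{4n^2+3np-n}$'', but with your stated lower bound they are not; you need both the extra $+1$ and the use of the full $\min_u$ (not just its truncation at $p$) followed by the elementary inequality above.
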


\section{Basic notions in  Nevanlinna theory}

\subsection{Counting functions of divisors.}
We set $||z|| = \big(|z_1|^2 + \dots + |z_m|^2\big)^{1/2}$ for
$z = (z_1,\dots,z_n) \in \C^m$ and define 
$$B(r) := \{ z \in \C^m : ||z|| < r\},\quad S(r) := \{ z \in \C^m : ||z|| = r\}\ (0<r<\infty).$$
Define
$$v_{m-1}(z) := \big(dd^c ||z||^2\big)^{m-1}\quad \quad \text{and}$$
$$\sigma_m(z):= d^c \text{log}||z||^2 \land \big(dd^c
\text{log}||z||^2\big)^{m-1}
 \text{on} \quad \C^m \setminus \{0\}.$$

We mean by a divisor divisor $\nu$ on a domain $\Omega$ in $\C^m$  a formal sum
$$ \nu=\sum_{\lambda\in \Lambda}a_\lambda Z_{\lambda}, $$
where $a_{\lambda}\in \Z$ and $\{Z_{\lambda}\}_{\lambda\in\Lambda}$ is a  locally finite family of distinct irreducible hypersurfaces of $\Omega$. Then, we may consider the divisor $\nu$ as a function on $\Omega$ with values in $\Z$ as follows
$$ \nu (z)=\sum_{Z_{\lambda}\ni z}a_{\lambda}. $$
 The support of $\nu$ is defined by $\supp\nu =\bigcup_{a_{\lambda}\ne 0}Z_{\lambda}$.

For a nonzero meromorphic function $\varphi$ on a domain $\Omega$ in
$\C^m$, we denote by $\nu^0_{\varphi}$ (resp. $\nu^\infty_{\varphi}$) the divisor of zeros (resp. divisor of poles) of $\varphi$, and denote by $\nu_\varphi =\nu^0_{\varphi}-\nu^\infty_{\varphi}$ the divisor generated by $\varphi$.

For a divisor $\nu$ on $\C^m$ and for positive integers $k,M$ (or
$M= \infty$), we define the counting functions of $\nu$ as follows.
Set
$$\nu^{(M)}(z)=\min\ \{M,\nu(z)\},$$
\begin{align*}
\nu_{\leqslant k}^{(M)}(z) =
\begin{cases}
0 & \text { if } \nu(z)>k,\\
\nu^{(M)}(z)& \text { if } \nu(z)\leqslant k,
\end{cases}
\end{align*}
\begin{align*}
\nu_{>k}^{(M)}(z) =
\begin{cases}
\nu^{(M)}(z)& \text { if } \nu(z)> k,\\
0 & \text { if } \nu(z)\leqslant k.
\end{cases}
\end{align*}

We define $n(t)$ by
\begin{align*}
n(t) =
\begin{cases}
\int\limits_{|\nu|\,\cap B(t)}
\nu(z) v_{n-1} & \text  { if } n \geqslant 2,\\
\sum\limits_{|z|\leqslant t} \nu (z) & \text { if }  n=1.
\end{cases}
\end{align*}

Similarly, we define \quad $n^{(M)}(t), \ n_{\leqslant k}^{(M)}(t), \
n_{>k}^{(M)}(t).$

Define
$$ N(r,\nu)=\int\limits_1^r \dfrac {n(t)}{t^{2n-1}}dt \quad (1<r<\infty).$$

Similarly, we define  \ $N(r,\nu^{(M)}), \ N(r,\nu_{\leqslant k}^{(M)}), \
N(r,\nu_{>k}^{(M)})$ and denote them by \ $N^{(M)}(r,\nu)$, $N_{\leqslant k}^{(M)}(r,\nu)$, $N_{>k}^{(M)}(r,\nu)$ respectively.

Let $\varphi : \C^m \longrightarrow \C $ be a meromorphic function.
Define 
$$N_{\varphi}(r)=N(r,\nu^0_{\varphi}),\ N_{\varphi}^{(M)}(r)=N^{(M)}(r,\nu^0_{\varphi}),$$
$$N_{\varphi,\leqslant k}^{(M)}(r)=N_{\leqslant k}^{(M)}(r,\nu^0_{\varphi}),\ N_{\varphi,> k}^{(M)}(r)=N_{>k}^{(M)}(r,\nu^0_{\varphi}).$$

For brevity we will omit the superscript $^{(M)}$ if $M=\infty$.

For a set $S\subset\C^m$, we define the characteristic function of $S$ by
$$ \chi_S(z)=\begin{cases}
1&\text{ if }z\in S,\\
0&\text{ if }z\not\in S.
\end{cases} $$
If the closure $\bar{S}$ of $S$ is an analytic subset of $\C^m$ then we denote by $N(r,S)$ the counting function of the reduced divisor whose support is the union of all irreducible components of $\bar{S}$ with codimension one.

\subsection{Characteristic and Proximity functions.} Let $f : \C^m \longrightarrow \P^n(\C)$ be a meromorphic
mapping.
For arbitrarily fixed homogeneous coordinates
$(w_0 : \dots : w_n)$ on $\P^n(\C)$, we take a reduced representation
$f = (f_0 : \dots : f_n)$, which means that each $f_i$ is a
holomorphic function on $\C^m$ and
$f(z) = \big(f_0(z) : \cdots : f_n(z)\big)$ outside the analytic set
$\{ f_0 = \cdots = f_n= 0\}$ of codimension $\geqslant 2$.
Set $\Vert f \Vert = \big(|f_0|^2 + \dots + |f_n|^2\big)^{1/2}$.

The characteristic function of $f$ is defined by
\begin{align*}
T_f(r) = \int\limits_{S(r)} \text{log}\Vert f \Vert \sigma_m -
\int\limits_{S(1)}\text{log}\Vert f\Vert \sigma_m.
\end{align*}

Let $H$ be a hyperplane in $\P^n(\C)$ given by
$H=\{a_0\omega_0+\cdots +a_n\omega_n\},$ where $(a_0,\ldots ,a_n)\ne (0,\ldots ,0)$.
We set  $(f,H)=\sum_{i=0}^na_if_i$. Then we see that the divisor $\nu_{(f,H)}$ does not depend on the reduced representation of $f$ and presentation of $H$. We define the proximity function of $H$ by
$$m_{f,H}(r)=\int_{S(r)}\log \dfrac {||f||\cdot ||H||}{|(f,H)|}\sigma_m-\int_{S(1)}\log \dfrac {||f||\cdot ||H||}{|(f,H)|}\sigma_m,$$
where $||H||=(\sum_{i=0}^N|a_i|^2)^{\frac {1}{2}}.$

Let $\varphi$ be a nonzero meromorphic function on $\C^m$, which
are occasionally regarded as a meromorphic mapping into $\P^1(\C)$. The
proximity function of $\varphi$ is defined by
$$m(r,\varphi):=\int_{S(r)}\log \max\ (|\varphi|,1)\sigma_n.$$

As usual, by the notation ``$|| \ P$''  we mean the assertion $P$ holds for all $r \in [0,\infty)$ excluding a Borel subset $E$ of the
interval $[0,\infty)$ with $\int_E dr<\infty$.

\subsection{Some lemmas.}The following results play essential roles in Nevanlinna theory (see \cite{NO}).

\begin{theorem}[The first main theorem]\label{2.7}
Let $f: \C^m \to \P^n(\C)$ be a linearly nondegenerate meromorphic mapping and $H$ be a hyperplane in $\P^n(\C)$. Then 
$$N_{(f,H)}(r)+m_{f,H}(r)=T_f(r)\ (r>1).$$
\end{theorem}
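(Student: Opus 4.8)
The plan is to reduce the stated identity to Jensen's formula applied to the single holomorphic function $(f,H)$. First I would subtract the two defining integrals. Writing out $T_f(r)$ and $m_{f,H}(r)$ from the definitions, the contributions of $\log\|f\|$ on $S(r)$ and on $S(1)$ cancel against the corresponding terms of $m_{f,H}(r)$, leaving
$$T_f(r)-m_{f,H}(r)=\int_{S(r)}\log\frac{|(f,H)|}{\|H\|}\,\sigma_m-\int_{S(1)}\log\frac{|(f,H)|}{\|H\|}\,\sigma_m.$$
Since $\|H\|$ is a positive constant and $\sigma_m$ is normalized so that $\int_{S(t)}\sigma_m=1$ for every $t$, the summand $\log\|H\|$ contributes the same amount to both integrals and therefore cancels. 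Hence the assertion is equivalent to
$$\int_{S(r)}\log|(f,H)|\,\sigma_m-\int_{S(1)}\log|(f,H)|\,\sigma_m=N_{(f,H)}(r).$$

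Next I would set $\varphi:=(f,H)=\sum_{i=0}^na_if_i$. This is a holomorphic function on $\C^m$, and $\varphi\not\equiv0$ because $f$ is linearly nondegenerate, so the coordinate functions $f_0,\dots,f_n$ of a reduced representation are linearly independent over $\C$. Thus it suffices to establish Jensen's formula
$$\int_{S(r)}\log|\varphi|\,\sigma_m-\int_{S(1)}\log|\varphi|\,\sigma_m=N(r,\nu^0_\varphi)-N(r,\nu^\infty_\varphi)$$
for an arbitrary nonzero meromorphic function $\varphi$, and then to specialize to the holomorphic case, where $\nu^\infty_\varphi=0$ and the right-hand side is exactly $N(r,\nu^0_{(f,H)})=N_{(f,H)}(r)$.

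To prove Jensen's formula I would combine two standard ingredients from \cite{NO}. The first is the Poincaré--Lelong identity $dd^c\log|\varphi|=\nu_\varphi$ as currents (with the normalization of $d^c$ fixed in \cite{NO}). The second is the Green--Jensen calculus lemma, which for a suitable function $u$ converts a difference of spherical means into an integral over balls:
$$\int_{S(r)}u\,\sigma_m-\int_{S(1)}u\,\sigma_m=\int_1^r\frac{dt}{t^{2m-1}}\int_{B(t)}dd^cu\wedge v_{m-1};$$
this lemma follows by integrating $d\bigl(u\,d^c\log\|z\|^2\wedge(dd^c\log\|z\|^2)^{m-1}\bigr)$ over an annulus $B(t)\setminus B(\varepsilon)$, using $(dd^c\log\|z\|^2)^m=0$ off the origin and letting $\varepsilon\to0$. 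Applying the calculus lemma to $u=\log|\varphi|$ and inserting Poincaré--Lelong gives
$$\int_{S(r)}\log|\varphi|\,\sigma_m-\int_{S(1)}\log|\varphi|\,\sigma_m=\int_1^r\frac{dt}{t^{2m-1}}\int_{B(t)}\nu_\varphi\wedge v_{m-1}=\int_1^r\frac{n_\varphi(t)}{t^{2m-1}}\,dt,$$
where $n_\varphi(t)=\int_{\supp\nu_\varphi\cap B(t)}\nu_\varphi\,v_{m-1}$ is precisely the density defining $N(r,\cdot)$, so the last integral is $N(r,\nu^0_\varphi)-N(r,\nu^\infty_\varphi)$. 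The normalizations of $d^c$ and $\sigma_m$ are chosen in \cite{NO} exactly so that no numerical constant survives.

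The main obstacle is the analytic justification of these two ingredients in the presence of singularities: the pole of $d^c\log\|z\|^2$ and $\sigma_m$ at the origin, and the logarithmic singularity of $\log|\varphi|$ along $\supp\nu_\varphi$. Both are handled by the usual regularization argument, namely excising small balls about the origin and a tubular neighborhood of $\supp\nu_\varphi$, applying ordinary Stokes on the smooth region, and verifying that the boundary contributions vanish as the radii shrink; the integrability of $\log|\varphi|$ against $\sigma_m$ together with the local structure of the divisor guarantees that these error terms tend to zero. Since all of this is recorded in \cite{NO}, in the paper itself the theorem is invoked rather than reproved.
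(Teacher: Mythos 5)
Your proposal is correct, and it matches the situation in the paper exactly: the paper gives no proof of Theorem \ref{2.7} at all, citing \cite{NO}, and your reduction (cancel $\log\Vert f\Vert$ between $T_f$ and $m_{f,H}$, absorb the constant $\log\Vert H\Vert$ by the unit mass of $\sigma_m$, then apply Jensen's formula to the holomorphic function $(f,H)\not\equiv 0$, itself obtained from Poincar\'e--Lelong plus the Green--Jensen calculus lemma with the standard regularization) is precisely the standard derivation recorded in that reference. The only cosmetic remark is that your exponent $t^{2m-1}$ is the dimensionally correct one for $\C^m$, whereas the paper's definition of $N(r,\nu)$ writes $t^{2n-1}$ (an $m$/$n$ typo there, not an issue with your argument).
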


\begin{theorem}[The second main theorem]\label{2.8}
Let $f: \C^m \to \P^n(\C)$ be a linearly nondegenerate meromorphic mapping and $H_1,\ldots ,H_q$ be hyperplanes in general position in $\P^n(\C).$ Then 
$$||\ \ (q-n-1)T_f(r) \leqslant \sum_{i=1}^qN_{(f,H_i)}^{(n)}(r)+o(T_f(r)).$$
\end{theorem}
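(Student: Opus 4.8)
The final statement asserts that for three maps $f^1,f^2,f^3\in\mathcal{F}(f,\{H_i,k_i\}_{i=1}^{2n+1},p)$ the product map $F=f^1\times f^2\times f^3$ into $\P^n(\C)^3$ is linearly degenerate, under the hypothesis $n\geqslant 5$, $p\leqslant n$, and the smallness condition $\sum_{i=1}^{2n+1}\frac{1}{k_i+1}<\frac{np-3n-p}{4n^2+3np-n}$. The overall strategy is a proof by contradiction: assuming $F$ is linearly nondegenerate, I would derive from the sharing hypotheses a system of linear relations among the components of the $f^\alpha$, then play the resulting auxiliary functions against the Second Main Theorem (Theorem \ref{2.8}) to produce a quantitative inequality on characteristic functions that the arithmetic hypothesis on $\sum 1/(k_i+1)$ violates.

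\textbf{Key steps.}
First I would fix reduced representations $f^\alpha=(f^\alpha_0:\cdots:f^\alpha_n)$ and write $h^\alpha_i=(f^\alpha,H_i)$ for the linear forms, so that each sharing condition (a) says $\min(\nu_{h^1_i},p)=\min(\nu_{h^2_i},p)=\min(\nu_{h^3_i},p)$ on the part of the divisor truncated at level $k_i$, and condition (b) forces agreement on the supports. The central device is to compare the three maps pairwise via the functions $\Phi^{\alpha\beta}_i=h^\alpha_i/h^\beta_i$; a standard argument shows that away from the (small) indeterminacy the difference $h^1_i h^2_j - h^1_j h^2_i$ type Cartan-style determinants vanish on the shared divisors to high order. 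The assumption that $F$ is linearly nondegenerate is exactly the statement that no nontrivial linear relation $\sum_{\alpha}\sum_i c^\alpha_i h^\alpha_i\equiv 0$ holds; I would exploit the contrapositive. The next step is to build, from the $2n+1$ hyperplanes and the three maps, suitable auxiliary meromorphic functions (combinations of the $h^\alpha_i$ and their Wronskian-type determinants) whose zero divisors are controlled below by the shared intersection divisors and above by $T_{f^\alpha}(r)$ via the Jensen formula / logarithmic derivative lemma.

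\textbf{The engine.}
Applying Theorem \ref{2.8} to each $f^\alpha$ gives $(2n+1-n-1)T_{f^\alpha}(r)=nT_{f^\alpha}(r)\leqslant\sum_{i=1}^{2n+1}N^{(n)}_{(f^\alpha,H_i)}(r)+o(T_{f^\alpha}(r))$. The truncation hypothesis lets me split each counting function as $N_{(f^\alpha,H_i)}=N_{(f^\alpha,H_i),\leqslant k_i}+N_{(f^\alpha,H_i),>k_i}$, and the elementary estimate $N^{(1)}_{(f^\alpha,H_i),>k_i}(r)\leqslant\frac{1}{k_i+1}N_{(f^\alpha,H_i)}(r)\leqslant\frac{1}{k_i+1}T_{f^\alpha}(r)+O(1)$ (First Main Theorem, Theorem \ref{2.7}) is what converts the smallness of $\sum 1/(k_i+1)$ into a usable bound; the high-multiplicity zeros contribute only an $\varepsilon$-fraction of the characteristic. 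For the truncated-low part, the sharing forces the three maps to have common zeros there, and a counting of the vanishing order of the auxiliary determinants on the common divisor produces a lower bound of the form (number involving $p$)$\cdot\sum_i N_{\leqslant k_i}$ against an upper bound $\leqslant(\text{const})\cdot\sum_\alpha T_{f^\alpha}(r)$. Combining these with the Second Main Theorem inequalities summed over $\alpha=1,2,3$ yields, after collecting terms, an inequality of the shape $\bigl(np-3n-p-(4n^2+3np-n)\sum\tfrac{1}{k_i+1}\bigr)\cdot\bigl(T_{f^1}+T_{f^2}+T_{f^3}\bigr)(r)\leqslant o(\cdots)$, whose coefficient is strictly positive precisely by the hypothesis on $\sum 1/(k_i+1)$. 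This is a contradiction, so $F$ must be linearly degenerate.

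\textbf{Main obstacle.}
The hardest part is the combinatorial bookkeeping of the vanishing orders of the auxiliary functions on the shared divisors and extracting the exact coefficient $\tfrac{np-3n-p}{4n^2+3np-n}$; this is where the linear-degeneracy assumption must be turned into concrete vanishing statements and where the precise value of $p$ enters through how deeply the determinants vanish along the common intersection locus. I expect this step to require a careful case analysis at points of differing multiplicity and the verification that the indeterminacy sets (of dimension $\leqslant m-2$ by the dimension hypothesis) do not contribute to the counting functions, so that the integrated inequalities genuinely close up with the stated sharp constant.
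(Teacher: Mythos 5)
There is a genuine gap, and it is structural: your proposal does not prove the statement in question at all. The statement is Theorem \ref{2.8} itself, the Second Main Theorem $(q-n-1)T_f(r)\leqslant\sum_{i=1}^qN^{(n)}_{(f,H_i)}(r)+o(T_f(r))$, for a single linearly nondegenerate map and $q$ hyperplanes in general position. What you have sketched instead is a proof of the paper's final degeneracy theorem (Theorem 1.4, on $f^1\times f^2\times f^3$ sharing $2n+1$ hyperplanes), and in the step you label ``The engine'' you explicitly \emph{apply} Theorem \ref{2.8} as a known tool. Relative to the assigned statement this is circular: you assume the very inequality you were asked to establish, and everything else in your write-up (the sharing conditions, the auxiliary determinants, the parameter $p$, the threshold $\frac{np-3n-p}{4n^2+3np-n}$) concerns hypotheses and conclusions that simply do not appear in Theorem \ref{2.8}, which involves no second map, no truncation levels $k_i$, and no sharing at all.

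For the record, the paper offers no proof of this statement either: it is quoted as a classical result of Nevanlinna--Cartan theory in several variables, with a citation to Noguchi--Ochiai. An actual proof would follow Cartan's argument: fix a reduced representation $f=(f_0:\cdots:f_n)$, use linear nondegeneracy to produce a nonvanishing generalized Wronskian $W$ built from suitable partial derivative operators $\mathcal{D}^{\alpha}$, bound the sum of proximity functions $\sum_i m_{f,H_i}(r)$ via the logarithmic derivative lemma applied to quotients of $W$ by products of the linear forms $(f,H_i)$, and extract the truncation at level $n$ from the fact that $W$ vanishes to order at least $\nu_{(f,H_i)}(z)-n$ along each divisor; combining with the First Main Theorem (Theorem \ref{2.7}) then yields the stated inequality. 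None of these ingredients is present in your proposal, so as an answer to the posed question it must be counted as missing the target, even though, as a sketch of Theorem 1.4, it does track the paper's actual proof of that other result (Lemma \ref{new1} plus the elementary inequality $3\min\{a,b,c\}+(4n-1)\geqslant\frac{4n-1+3p}{2n+p}\sum\min\{\cdot,n\}$) reasonably closely.
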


For meromorphic functions $F,G,H$ on $\C^m$ and $\alpha =(\alpha_1,\ldots ,\alpha_m)\in \Z_+^m$, we put
$$
\Phi^\alpha(F,G,H):=F\cdot G\cdot H\cdot\left | 
\begin {array}{cccc}
1&1&1\\
\frac {1}{F}&\frac {1}{G} &\frac {1}{H}\\
\mathcal {D}^{\alpha}(\frac {1}{F}) &\mathcal {D}^{\alpha}(\frac {1}{G}) &\mathcal {D}^{\alpha}(\frac {1}{H})\\
\end {array}
\right|
$$
\vskip0.2cm
\noindent
\begin{lemma}[{\cite[Proposition 3.4]{Fu98}}]\label{4.3} If $\Phi^\alpha(F,G,H)=0$ and $\Phi^\alpha(\frac {1}{F},\frac {1}{G},\frac {1}{H})=0$ for all $\alpha$ with $|\alpha|\le1$, then one of the following assertions holds :

(i) \ $F=G, G=H$ or $H=F$

(ii) \ $\frac {F}{G},\frac {G}{H}$ and $\frac {H}{F}$ are all constant.
\end{lemma}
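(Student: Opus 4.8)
The plan is to exploit the fact that the two determinantal hypotheses, imposed over all first-order $\alpha$, force two explicit ``cross-ratio'' expressions built from $F,G,H$ to be constant, and then to combine these algebraically. Throughout I regard $F,G,H$ as nonzero meromorphic functions on $\C^m$, as in the context where the lemma is applied, so that $1/F,1/G,1/H$ and all the displayed quantities are meromorphic. First I would dispose of $|\alpha|=0$: then $\mathcal{D}^\alpha$ is the identity, so the last two rows of the defining $3\times 3$ determinant coincide and $\Phi^0\equiv 0$ automatically. Hence only the multi-indices $\alpha=e_j$ $(1\leqslant j\leqslant m)$, i.e.\ the first-order partials $\partial/\partial z_j$, carry information.

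Next, fix $\alpha=e_j$ and reduce each $3\times 3$ determinant to a $2\times 2$ Wronskian by subtracting the first column from the other two. In $\Phi^\alpha(\tfrac1F,\tfrac1G,\tfrac1H)$ the second row is $(F,G,H)$ (since $1/(1/F)=F$), and after the column operations the vanishing determinant becomes the meromorphic identity
$$(G-F)\,\mathcal{D}^{\alpha}(H-F) - (H-F)\,\mathcal{D}^{\alpha}(G-F) = 0 ,$$
which is precisely the numerator of $\mathcal{D}^{\alpha}\big((H-F)/(G-F)\big)$. The identical reduction applied to $\Phi^\alpha(F,G,H)$, whose second row is $(1/F,1/G,1/H)$, gives the analogous identity with $F,G,H$ replaced by $1/F,1/G,1/H$.

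The final step is a short case analysis. If two of $F,G,H$ coincide, conclusion (i) holds and we are done, so assume $F,G,H$ are pairwise distinct. Then $G-F\not\equiv 0$, and dividing the first identity by $(G-F)^2$ shows $\mathcal{D}^{\alpha}\big((H-F)/(G-F)\big)=0$ for every $j$; since a meromorphic function on the connected manifold $\C^m$ whose first partials all vanish is constant, $(H-F)/(G-F)\equiv c$ for a constant $c$. The same argument applied to the other identity gives $(1/H-1/F)/(1/G-1/F)\equiv c'$ constant, and clearing denominators yields the relation $c'=c\cdot(G/H)$. Pairwise distinctness forces $c\neq 0$ (else $F=H$) and $c\neq 1$ (else $G=H$); from $c'=c\,(G/H)$ with $c\neq 0$ we conclude $G/H$ is constant, and substituting $G=(G/H)H$ into $(H-F)/(G-F)\equiv c$ and solving the resulting linear relation for $F/H$ (legitimate since $c\neq 1$) shows $F/H$ is constant as well. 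Hence $F/G$, $G/H$ and $H/F$ are all constant, which is conclusion (ii).

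I expect the main obstacle to be organizational rather than analytic: one must keep the two parallel conditions and their reductions straight, verify that a meromorphic function with vanishing gradient is indeed globally constant (clear on the open set where it is holomorphic, then extended by the identity theorem), and check that every division performed is legitimate. This last point is exactly what the preliminary reduction to the pairwise-distinct case secures, so the degenerate subcases feed directly into alternative (i) while the generic case produces (ii).
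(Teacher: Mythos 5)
Your proof is correct. The paper gives no proof of this lemma --- it is quoted directly from \cite[Proposition 3.4]{Fu98} --- and your argument (reduce each $3\times 3$ determinant via column operations to the Wronskian identities $(G-F)\,\mathcal{D}^{\alpha}(H-F)=(H-F)\,\mathcal{D}^{\alpha}(G-F)$ and its inverted counterpart, deduce in the pairwise-distinct case that $\frac{H-F}{G-F}\equiv c$ and $\frac{1/H-1/F}{1/G-1/F}\equiv c'=c\,\frac{G}{H}$ with $c\notin\{0,1\}$, and solve for the constant ratios) is exactly the standard Fujimoto-style argument, the same device this paper itself reuses in part (a) of the proof of Lemma \ref{4.4}.
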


\begin{lemma}\label{new1}
Let $f^1,f^2,f^3$ be three maps in $\mathcal F(f,\{H_i,k_i\}_{i=1}^q,p)$. Assume that $f^i$ has a representation $f^i=(f^i_{0}:\cdots :f^i_{n})$, $1\leqslant i\leqslant 3$. Suppose that there exist $s,t,l\in\{1,\cdots ,q\}$ such that
$$ 
P:=Det\left (\begin{array}{ccc}
(f^1,H_s)&(f^1,H_t)&(f^1,H_l)\\ 
(f^2,H_s)&(f^2,H_t)&(f^2,H_l)\\
(f^3,H_s)&(f^3,H_t)&(f^3,H_l)
\end{array}\right )\not\equiv 0.
$$
Then we have
\begin{align*}
T(r)\geqslant&\sum_{i=s,t,l}(N(r,\min\{\nu_{(f^u,H_i),\leqslant k_i};1\leqslant u\leqslant 3\})\\
&-N^{(1)}_{(f,H_i),\le k_i}(r))+ 2\sum_{i=1}^qN^{(1)}_{(f,H_i),\le k_i}(r)+o(T(r)),
\end{align*}
where $T(r)=\sum_{u=1}^3T_{f^u}(r)$.
\end{lemma}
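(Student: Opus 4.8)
The plan is to apply Jensen's formula to the entire function $P$ and estimate $N_P(r)$ from both sides. Since each entry $(f^u,H_i)$ is holomorphic, $P$ is entire, so $N_P(r)=N(r,\nu^0_P)=\int_{S(r)}\log|P|\sigma_m-\int_{S(1)}\log|P|\sigma_m$. For the upper bound I would use $|(f^u,H_j)|\leqslant\|H_j\|\cdot\|f^u\|$, so that expanding the $3\times 3$ determinant gives $\log|P|\leqslant\sum_{u=1}^3\log\|f^u\|+O(1)$ pointwise; integrating over $S(r)$ and subtracting the value on $S(1)$ yields $N_P(r)\leqslant\sum_{u=1}^3T_{f^u}(r)+O(1)=T(r)+O(1)$. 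Hence it suffices to bound $N_P(r)$ from below by the right-hand side of the asserted inequality.

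The core is a local computation of $\ord_{z_0}P$ at generic points $z_0$ of the hypersurfaces $\{z;\nu_{(f,H_i),\leqslant k_i}(z)>0\}$. Write $\mu_i:=\min\{\nu_{(f^u,H_i),\leqslant k_i}:1\leqslant u\leqslant 3\}$. By the dimension hypothesis these hypersurfaces meet one another, and the indeterminacy loci of the reduced representations, in codimension $\geqslant 2$; so for counting functions I may treat a generic point $z_0$ of one of them as lying on no other and as a point where all three reduced representations are nonvanishing. At such a $z_0$, conditions (a) and (b) force $f^1(z_0)=f^2(z_0)=f^3(z_0)$ in $\P^n(\C)$, so the three rows of $P$ are proportional there; after the row operations $R_u\mapsto R_u-\lambda_u^{-1}R_1$ (where $\lambda_u$ is the scalar with $f^1(z_0)=\lambda_uf^u(z_0)$), the second and third rows vanish at $z_0$.

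I would then split into two cases. If $i\notin\{s,t,l\}$, the three columns do not vanish at $z_0$, but after the row operations two rows vanish to order $\geqslant 1$, so every term of the determinant expansion carries a factor of order $\geqslant 1$ from each of these two rows; hence $\ord_{z_0}P\geqslant 2$, contributing $2N^{(1)}_{(f,H_i),\leqslant k_i}(r)$. If $i\in\{s,t,l\}$, say $i=s$, then the first column additionally vanishes to order $\min_u\nu_{(f^u,H_s)}(z_0)$; combining this with the row proportionality through the determinant expansion, every term is seen to have order $\geqslant\min_u\nu_{(f^u,H_s)}(z_0)+1\geqslant\mu_s(z_0)+1$, so $\ord_{z_0}P\geqslant\mu_s(z_0)+1$, contributing $N(r,\mu_s)+N^{(1)}_{(f,H_s),\leqslant k_s}(r)$. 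Summing over all hyperplanes (whose generic preimages are disjoint by general position) gives
\begin{align*}
N_P(r)\geqslant &\sum_{i=s,t,l}\big(N(r,\mu_i)+N^{(1)}_{(f,H_i),\leqslant k_i}(r)\big)\\
&+\sum_{i\notin\{s,t,l\}}2N^{(1)}_{(f,H_i),\leqslant k_i}(r).
\end{align*}
Rearranging the right-hand side as $\sum_{i=s,t,l}\big(N(r,\mu_i)-N^{(1)}_{(f,H_i),\leqslant k_i}(r)\big)+2\sum_{i=1}^qN^{(1)}_{(f,H_i),\leqslant k_i}(r)$ and combining with the upper bound $N_P(r)\leqslant T(r)+O(1)$ yields the assertion, absorbing $O(1)$ into $o(T(r))$.

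The step I expect to be the main obstacle is the local order computation together with the truncation bookkeeping: one must verify that the order estimate survives the passage from the genuine multiplicities $\nu_{(f^u,H_i)}$ to their truncations $\nu_{(f^u,H_i),\leqslant k_i}$ and to the minimum $\mu_i$, and that conditions (a) and (b) really do force $f^1=f^2=f^3$ on the full set $\bigcup_i\{z;\nu_{(f,H_i),\leqslant k_i}(z)>0\}$, including at points where some individual multiplicity exceeds its $k_j$. Since truncation and taking minima only decrease the divisors while the true vanishing order of $P$ is governed by the untruncated multiplicities, all inequalities point in the favorable direction, which is precisely what makes the estimate close.
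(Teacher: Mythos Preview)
Your proof is correct and follows essentially the same route as the paper: bound $N_P(r)$ above by $T(r)+O(1)$ via Jensen's formula and $\log|P|\leqslant\sum_{u}\log\|f^u\|+O(1)$, and bound it below by the pointwise estimates $\nu_P(z_0)\geqslant\mu_i(z_0)+1$ when $i\in\{s,t,l\}$ and $\nu_P(z_0)\geqslant 2$ when $i\notin\{s,t,l\}$, at generic points $z_0$ of $\{\nu_{(f,H_i),\leqslant k_i}>0\}$. The only cosmetic difference is that for $i\in\{s,t,l\}$ the paper uses a cofactor expansion along that column (each $(f^u,H_i)$ has order $\geqslant m$ and each complementary $2\times 2$ minor vanishes on the hypersurface), which is equivalent to your row-operation argument but makes the order count $m+1$ slightly more transparent.
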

\begin{proof}
Denote by $S$ the closure of $\bigcup_{1\leqslant u\leqslant 3}I(f^u)\cup\bigcup_{1\leqslant i<j\leqslant 2n+2}\{z;\nu_{(f,H_i),\leqslant k_i}(z)\cdot\nu_{(f,H_j),\leqslant k_j}(z)>0\}$.
Then $S$ is an analytic subset of codimension two of $\C^m$. 

For $z\not\in S$, we consider the following two cases:

\textit{Case 1.} $z$ is a zero of $(f,H_{i})$ with multiplicity at most $k_i$, where $i\in\{s,t,l\}$. For instance, we suppose that $i=s$.
We set 
$$m=\min\{\nu_{(f^1,H_s),\leqslant k_s}(z),\nu_{(f^2,H_s),\leqslant k_s}(z),\nu_{(f^3,H_s),\leqslant k_s}(z)\}.$$ 
Then there exist a neighborhood $U$ of $z$ and a holomorphic function $h$ defined on $U$ 
such that $\zero (h)=U\cap\zero (f,H_{s})$ and $dh$ has no zero. Then the functions $\varphi_u=\frac{(f^u,H_s)}{h^m}\ (1\leqslant u\leqslant 3)$ are holomorphic in a neighborhood of $z$. On the other hand, since $f^1=f^2=f^3$ on $\supp \nu_{(f,H_{s}),\leqslant k_s}$, we have
$$ P_{uv}:=(f^u,H_{t})(f^v,H_{l})-(f^u,H_{l})(f^v,H_{t})=0\text{ on }\supp \nu_{(f,H_{s}),\leqslant k_s},\ 1\leqslant u<v\leqslant 3.$$
Therefore, there exist holomorphic functions $\psi_{uv}$ on a neighborhood of $z$ such that $P_{uv}=h\psi_{uv}.$
Then we have
$$ P=h^{m+1}(\varphi_1\psi_{23}-\varphi_2\psi_{13}+\varphi_3\psi_{12}) $$
on a neighborhood of $z$. This yeilds that
$$ \nu_P(z)\geqslant m+1= \sum_{i=s,t,l}(\min\{\nu_{(f^u,H_i),\leqslant k_i}(z);1\leqslant u\leqslant 3\}-\nu^{(1)}_{(f,H_i),\leqslant k_i}(z))+ 2\sum_{i=1}^q\nu^{(1)}_{(f,H_i),\leqslant k_i}(z).$$

\textit{Case 2.} $z$ is a zero point of $(f,H_i)$ with multiplicity at most $k_i$, where $i\not\in\{s,t,l\}$. There exist an index $v$ such that $(f^1,H_v)(z)\ne 0$. Since $f^1(z)=f^2(z)=f^3(z),$ we have $(f^u,H_v)(z)\ne 0\ (1\leqslant u\leqslant 3)$ and
 \begin{align*} 
P &= \prod_{u=1}^3(f^u,H_v)\cdot\det\left (
\begin{array}{ccc}
\dfrac{(f^1,H_s)}{(f^1,H_v)}&\dfrac{(f^1,H_t)}{(f^1,H_v)}&\dfrac{(f^1,H_{l})}{(f^1,H_{v})}\\
\dfrac{(f^2,H_1)}{(f^2,H_l)}&\dfrac{(f^2,H_t)}{(f^2,H_l)}&\dfrac{(f^2,H_{s})}{(f^2,H_{l})}\\
\dfrac{(f^3,H_1)}{(f^3,H_l)}&\dfrac{(f^3,H_t)}{(f^3,H_l)}&\dfrac{(f^3,H_{s})}{(f_3,H_{l})}
\end{array}\right )\\
\\
&=\prod_{u=1}^3(f^u,H_l)\cdot\det\left (
\begin{array}{ccc} 
\dfrac{(f^1,H_1)}{(f^1,H_l)}&\dfrac{(f^1,H_t)}{(f^1,H_l)}&\dfrac{(f^1,H_{s})}{(f^1,H_{l})}\\
\\
\frac{(f^2,H_1)}{(f^2,H_l)}-\frac{(f^1,H_1)}{(f^1,H_l)}&\frac{(f^2,H_t)}{(f^2,H_l)}-\frac{(f^1,H_t)}{(f^1,H_l)}&\frac{(f^2,H_s)}{(f^2,H_l)}-\frac{(f^1,H_s)}{(f^1,H_l)}\\
\\
\frac{(f^3,H_1)}{(f^3,H_l)}-\frac{(f^1,H_1)}{(f^1,H_l)}&\frac{(f^3,H_t)}{(f^3,H_l)}-\frac{(f^1,H_t)}{(f^1,H_l)}&\frac{(f^3,H_s)}{(f^3,H_l)}-\frac{(f^1,H_s)}{(f^1,H_l)}
\end{array}\right ).
\end{align*}
vanishes at $z$ with multiplicity at least two. Therefore, we have
$$ \nu_P(z)\geqslant 2= \sum_{i=s,t,l}(\min\{\nu_{(f^u,H_i),\leqslant k_i}(z);1\leqslant u\leqslant 3\}-\nu^{(1)}_{(f,H_i),\leqslant k_i}(z))+ 2\sum_{i=1}^q\nu^{(1)}_{(f,H_i),\leqslant k_i}(z).$$

Thus, from the above two cases we have 
\begin{align*}
\nu_P(z)\geqslant \sum_{i=s,t,l}(\min\{\nu_{(f^u,H_i),\leqslant k_i}(z);1\leqslant u\leqslant 3\}-\nu^{(1)}_{(f,H_i),\leqslant k_i}(z))+ 2\sum_{i=1}^q\nu^{(1)}_{(f,H_i),\leqslant k_i}(z),
\end{align*}
for all $z$ outside the analytic set $S$. Integrating both sides of the above inequality, we get
\begin{align*}
N_P(r)\geqslant&\sum_{i=s,t,l}(N(r,\min\{\nu_{(f^u,H_i),\leqslant k_i};1\leqslant u\leqslant 3\})-N^{(1)}_{(f,H_i),\leqslant k_i}(r))\\
&+ 2\sum_{i=1}^qN^{(1)}_{(f,H_i),\leqslant k_i}(r)+o(T(r)).
\end{align*}

On the other hand, by Jensen's formula and the definition of the characteristic function we have
\begin{align*}
N_{P}(r)=&\int_{S(r)}\log |P|\sigma_m + O(1)\\
\leqslant &\sum_{u=1}^3\int_{S(r)}\log (|(f^u,H_1)|^2+|(f^u,H_t)|^2+|(f^u,H_{s})|)^{\frac{1}{2}}\sigma_m +O(1)\\
\leqslant & \sum_{u=1}^3\int_{S(r)}\log ||f^u||\sigma_m +O(1)=T(r)+o(T(r)).
\end{align*}
Thus, we have
\begin{align*}
T(r)\geqslant&\sum_{i=s,t,l}(N(r,\min\{\nu_{(f^u,H_i),\leqslant k_i};1\leqslant u\leqslant 3\})-N^{(1)}_{(f,H_i),\leqslant k_i}(r))\\
&+ 2\sum_{i=1}^qN^{(1)}_{(f,H_i),\leqslant k_i}(r)+o(T(r)).
\end{align*}
The lemma is proved.
\end{proof}

\section{Proof of  Main Theorems}
Let $f$ be a linearly nondegenerate meromorphic mapping of $\C^m$ into $\P^n(\C )$. Let $H_1,\ldots ,H_{2n+2}$ be $2n+2$ hyperplanes of $\P^n(\C )$ in general position and let $k_i\geqslant n\ (1\leqslant i\leqslant 2n+2)$ be positive integers or $+\infty$ with
$$\dim \{z; \nu_{(f,H_i),\leqslant k_i}(z)\cdot\nu_{(f,H_j),\leqslant k_j}(z)> 0\} \leqslant m-2 \quad (1 \leqslant i<j \leqslant 2n+2).$$ 

In order to prove Theorem \ref{1.1}, we need the following lemmas.

\begin{lemma}\label{3.1} If $\sum_{i=1}^{2n+2}\frac{1}{k_i+1}<\frac{1}{n},$ then every mapping $g$ in $\mathcal F(f,\{H_i,k_i\}_{i=1}^{2n+2},1)$ is linearly nondegenerate and 
$$ || \ T_{g}(r)=O(T_{f}(r)) \mathrm{\ and\ } || \ T_{f}(r)=O(T_{g}(r)).$$
\end{lemma}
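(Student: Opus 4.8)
The plan is to reduce everything to the Second Main Theorem (Theorem \ref{2.8}) for $f$, a multiplicity-truncation estimate, and one structural observation about where $f$ and $g$ agree.

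First I would extract a lower bound for the ``shared'' counting. Writing $\Sigma=\sum_{i=1}^{2n+2}\frac1{k_i+1}<\frac1n$, apply the Second Main Theorem to $f$ with the $2n+2$ hyperplanes to get $||\ (n+1)T_f(r)\leqslant\sum_i N^{(n)}_{(f,H_i)}(r)+o(T_f(r))$. For each $i$ I would split $N^{(n)}_{(f,H_i)}=N^{(n)}_{(f,H_i),\leqslant k_i}+N^{(n)}_{(f,H_i),>k_i}$ and use the two elementary pointwise bounds $\nu^{(n)}_{\leqslant k_i}\leqslant n\,\nu^{(1)}_{\leqslant k_i}$ and $\nu^{(n)}_{>k_i}\leqslant\frac{n}{k_i+1}\nu^0_{(f,H_i)}$ (the latter because a zero of multiplicity $>k_i$ satisfies $\nu\geqslant k_i+1$), together with the First Main Theorem $N_{(f,H_i)}(r)\leqslant T_f(r)+O(1)$. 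This yields $||\ (n+1-n\Sigma)T_f(r)\leqslant n\sum_i N^{(1)}_{(f,H_i),\leqslant k_i}(r)+o(T_f(r))$, and since $\frac{n+1-n\Sigma}{n}=1+(\frac1n-\Sigma)>1$ I obtain $\sum_i N^{(1)}_{(f,H_i),\leqslant k_i}(r)\geqslant(1+\varepsilon)T_f(r)-o(T_f(r))$ with $\varepsilon=\frac1n-\Sigma>0$. As $T_f(r)\to\infty$, this forces the shared zero counting to strictly dominate $T_f$.

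Then the main point is the nondegeneracy of $g$, which I expect to be the crux. Suppose $g$ is linearly degenerate, and let $L$ be the smallest linear subspace of $\P^n(\C)$ containing $g(\C^m)$, so $\dim L\leqslant n-1$ and $L$ lies in some hyperplane $H$. Since $f$ is nondegenerate, $(f,H)\not\equiv0$. By condition (b), $f=g$ on $E:=\bigcup_i\{z;\nu_{(f,H_i),\leqslant k_i}(z)>0\}$, and there $g(z)\in L\subset H$, whence $(f,H)(z)=0$ on $E$. Because the sets $\{z;\nu_{(f,H_i),\leqslant k_i}(z)>0\}$ meet pairwise only in codimension $\geqslant2$, summing the reduced divisors gives $\sum_i N^{(1)}_{(f,H_i),\leqslant k_i}(r)\leqslant N^{(1)}_{(f,H)}(r)\leqslant N_{(f,H)}(r)\leqslant T_f(r)+O(1)$ by the First Main Theorem. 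This contradicts the lower bound $(1+\varepsilon)T_f(r)-o(T_f(r))$ of the previous step; hence $g$ is linearly nondegenerate (the same computation rules out $g$ constant). I regard the construction of $H\supset L$ and the conversion of ``$f=g$ on the shared zeros'' into the upper bound $N^{(1)}_{(f,H)}\leqslant T_f$ as the decisive, non-routine idea.

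Finally, with $g$ known to be nondegenerate, the comparison of characteristic functions is immediate and symmetric. Condition (a) with $d=1$ gives $N^{(1)}_{(f,H_i),\leqslant k_i}(r)=N^{(1)}_{(g,H_i),\leqslant k_i}(r)$ for all $i$. Feeding this into the inequality of the first step and using $N^{(1)}_{(g,H_i),\leqslant k_i}(r)\leqslant N_{(g,H_i)}(r)\leqslant T_g(r)+O(1)$ yields $||\ (n+1-n\Sigma)T_f(r)\leqslant n(2n+2)T_g(r)+o(T_f(r))$, i.e. $||\ T_f(r)=O(T_g(r))$. Running the identical argument with the Second Main Theorem applied to $g$ (now legitimate) and interchanging the roles of $f$ and $g$ gives $||\ T_g(r)=O(T_f(r))$, which completes the proof.
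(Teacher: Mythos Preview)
Your proposal is correct and follows essentially the same approach as the paper: both use the agreement $f=g$ on the shared zero set to force $(f,H)$ to vanish there when $g$ is assumed degenerate, then pit the First Main Theorem upper bound $N_{(f,H)}(r)\leqslant T_f(r)+O(1)$ against the Second Main Theorem lower bound $\sum_i N^{(1)}_{(f,H_i),\leqslant k_i}(r)\geqslant(1+\tfrac1n-\Sigma)T_f(r)-o(T_f(r))$; the growth comparison is then obtained by the same SMT/truncation estimate using $N^{(1)}_{(f,H_i),\leqslant k_i}=N^{(1)}_{(g,H_i),\leqslant k_i}$. The only cosmetic difference is that you isolate the key inequality first and reuse it, whereas the paper runs the two parts separately.
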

\begin{proof}
Suppose that there exists a hyperplane $H$ satisfying $g(\C^m)\subset H$. We assume that $f$ and $g$ have reduce representations $f=(f_{0}:\cdots :f_{n})$ and $g=(g_0:\cdots :g_n)$ respectively. Assume that $H=\{(\omega_0:\cdots :\omega_n)\ |\ \sum_{i=0}^na_i\omega_i=0\}$. 
Since $f$ is linearly nondegenerate, $(f.H)\not\equiv 0$. On the other hand $(f,H)(z)=(g,H)(z)=0$ for all $z\in\bigcup_{i=1}^{2n+2}\{\nu_{(f,H_i),\leqslant k_i}\}$, hence
$$ N_{(f,H)}(r)\geqslant \sum_{i=1}^{2n+2}N_{(f,H_i),\leqslant k_i}^{(1)}(r).$$
It yields that
\begin{align*}
||\ T_{f}(r)&\geqslant N_{(f,H)}(r)\geqslant \sum_{i=1}^{2n+2}N_{(f,H_i),\leqslant k_i}^{(1)}(r)=\sum_{i=1}^{2n+2}\bigl (N_{(f,H_i)}^{(1)}(r)-N_{(f,H_i),> k_i}^{(1)}(r)\bigl )\\ 
&\geqslant\sum_{i=1}^{2n+2}\dfrac{1}{n}N_{(f,H_i)}^{(n)}(r)-\sum_{i=1}^{2n+2}\dfrac{1}{k_i+1}T_{f}(r)\geqslant \big (\dfrac{n+1}{n}-\sum_{i=1}^{2n+2}\dfrac{1}{k_i+1}\big )T_f(r)+o(T_{f}(r)). 
\end{align*}
Letting $r\longrightarrow +\infty$, we get 
$$ \sum_{i=1}^{2n+2}\dfrac{1}{k_i+1}\geqslant\dfrac{1}{n}. $$
This is a contradiction. Hence $g(\C^m)$ can not be contained in any hyperplanes of $\P^n(\C)$. Therefore $g$ is linearly nondegenerate.

Also by the Second Main Theorem, we have 
\begin{align*}
||\quad (n+1) T_{g}(r)\leqslant &\sum_{i=1}^{2n+2} N_{(g,H_i)}^{(n)}(r)+o(T_{g}(r))\\
\leqslant & \sum_{i=1}^{2n+2}n\ N_{(g,H_i)}^{(1)}(r)+o(T_{g}(r))\\
= & \sum_{i=1}^{2n+2}n\bigl (N_{(g,H_i),\leqslant k_i}^{(1)}(r)+ N_{(g,H_i),>k_i}^{(1)}(r)\bigl )+o(T_{g}(r))\\
\leqslant& \sum_{i=1}^{2n+2}n\bigl (N_{(f,H_i),\leqslant k_i}^{(1)}(r)+\dfrac{1}{k_i+1}T_{g}(r)\bigl)+o(T_{g}(r))\\
\leqslant& \sum_{i=1}^{2n+2}n\bigl (T_{f}(r)+\dfrac{1}{k_i+1}T_{g}(r)\bigl)+o(T_{f}(r)+T_{g}(r)).
\end{align*}
Thus
$$ \bigl (n+1-\sum_{i=1}^{2n+2}\dfrac{n}{k_i+1}\bigl ) T_{g}(r)\leqslant n(2n+2)T_{f}(r)+o(T_{f}(r)+T_{g}(r)).$$
We note that
$$ n+1-\sum_{i=1}^{2n+2}\dfrac{n}{k_i+1}>n>0.$$
Hence $|| T_{g}(r)=O(T_{f}(r)).$ Similarly, we get $||  T_{f}(r)=O(T_{g}(r)).$
\end{proof}

\begin{lemma}\label{1.1}

Assume that $n\geqslant 2$ and
$$  \sum_{i=1}^{2n+2}\dfrac{1}{k_i+1}< \dfrac{n+1}{n(3n+1)}.$$
Then for three maps $f^1,f^2,f^3\in\mathcal{F}(f,\{H_i,k_i\}_{i=1}^{2n+2},1)$ we have $f^1\wedge f^2\wedge f^3=0.$
\end{lemma}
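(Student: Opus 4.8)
The plan is to argue by contradiction. Suppose $f^1\wedge f^2\wedge f^3\not\equiv 0$. Since the threshold $\frac{n+1}{n(3n+1)}$ is smaller than $\frac1n$, Lemma \ref{3.1} applies and tells me that each $f^u$ is linearly nondegenerate with $T_{f^u}(r)$ and $T_f(r)$ mutually $O$ of one another; I may also assume the three maps are pairwise distinct, for otherwise the wedge vanishes trivially. Writing $\tilde f^u=(f^u_0:\cdots:f^u_n)$, the hypothesis $f^1\wedge f^2\wedge f^3\not\equiv 0$ means the $3\times(n+1)$ matrix $(f^u_j)$ has rank $3$; because $H_1,\dots,H_{2n+2}$ are in general position the forms $(\,\cdot\,,H_i)$ span the dual space, so the $3\times(2n+2)$ matrix $\big((f^u,H_i)\big)$ also has rank $3$ and some $3\times3$ minor $P$ is not identically zero. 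The goal is to contradict this.

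The engine is Fujimoto's criterion, Lemma \ref{4.3}. For an admissible pair of indices $s\neq t$ I would form the ratios $F=\tfrac{(f^1,H_s)}{(f^1,H_t)}$, $G=\tfrac{(f^2,H_s)}{(f^2,H_t)}$, $H=\tfrac{(f^3,H_s)}{(f^3,H_t)}$ and consider the auxiliary functions $\Phi^\alpha(F,G,H)$ and $\Phi^\alpha(\tfrac1F,\tfrac1G,\tfrac1H)$ for all $|\alpha|\le 1$. The key analytic input is a zero estimate: at a zero of $(f,H_i)$ of multiplicity $\le k_i$ with $i\neq s,t$, condition (b) forces $f^1=f^2=f^3$ while $(f^u,H_s),(f^u,H_t)$ stay finite and nonzero there (by the codimension-two assumption), so $F,G,H$ share a common finite nonzero value and each $\Phi^\alpha$ vanishes there to order at least two. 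Summing over the $2n$ hyperplanes $i\neq s,t$ yields a lower bound for $N^0_{\Phi^\alpha}(r)$ of the shape $2\sum_{i\neq s,t}N^{(1)}_{(f,H_i),\le k_i}(r)$.

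To force $\Phi^\alpha\equiv 0$ I would then bound its growth from above. Expanding the defining determinant, $\Phi^\alpha$ is a sum of terms of the form (one of $F,G,H$) times a logarithmic derivative of another, so its Nevanlinna characteristic is controlled by $\sum_u T_{f^u}(r)$ together with the poles produced by $H_s,H_t$ and an $O\!\big(\tfrac1{k_i+1}T(r)\big)$ contribution from zeros of multiplicity $>k_i$ (by the First Main Theorem), the logarithmic‑derivative part being $o(T(r))$. Feeding the Second Main Theorem (Theorem \ref{2.8}) for each $f^u$, together with condition (a) identifying $N^{(1)}_{(f^u,H_i),\le k_i}$ with $N^{(1)}_{(f,H_i),\le k_i}$, into the zero bound and comparing with $N^0_{\Phi^\alpha}(r)\le T(r,\Phi^\alpha)+O(1)$, I expect the hypothesis $\sum_i\tfrac1{k_i+1}<\tfrac{n+1}{n(3n+1)}$ to make the zero bound strictly exceed the growth bound, which is impossible unless $\Phi^\alpha\equiv 0$; the same reasoning handles the reciprocals. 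Thus all hypotheses of Lemma \ref{4.3} hold for each admissible pair, and for every such $(s,t)$ either two of $F,G,H$ coincide or all their pairwise ratios are constant.

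Finally I would translate these alternatives into bilinear identities among the $(f^u,H_i)$: coincidence gives $(f^u,H_s)(f^v,H_t)=(f^u,H_t)(f^v,H_s)$, and the constant‑ratio case gives the same with a nonzero constant factor. Letting $(s,t)$ range over enough pairs and using that $2n+2$ forms in general position span the dual space, these relations force either $\tilde f^u=c\,\tilde f^v$ (two of the maps agree) or a nontrivial linear relation among $\tilde f^1,\tilde f^2,\tilde f^3$; either way $f^1\wedge f^2\wedge f^3\equiv 0$, contradicting $P\not\equiv 0$. I expect the main obstacle to be exactly the zero‑versus‑growth estimate forcing $\Phi^\alpha\equiv 0$: pinning the numerical balance down to the precise constant $\tfrac{n+1}{n(3n+1)}$ requires carefully separating the $\le k_i$ and $>k_i$ parts of each counting function, choosing (or averaging over) the denominator indices $s,t$ so that the two omitted hyperplanes cost as little as possible, and tracking how the poles at $H_s,H_t$ interact with the Second Main Theorem. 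A secondary difficulty is the combinatorial bookkeeping needed to upgrade the per‑pair dichotomy of Lemma \ref{4.3} into global linear dependence of the three representations.
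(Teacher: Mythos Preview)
Your approach is genuinely different from the paper's, and it has a real gap at the point you yourself flag.

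The paper does \emph{not} use the $\Phi^\alpha$ machinery or Lemma~\ref{4.3} to prove this lemma. Instead it works directly with the $3\times 3$ determinant
\[
P=\det\bigl((f^u,H_{i_v})\bigr)_{1\le u,v\le 3}
\]
via Lemma~\ref{new1}, which bounds $T(r)=\sum_uT_{f^u}(r)$ from below by counting functions attached to the three chosen hyperplanes plus $2\sum_i N^{(1)}_{(f,H_i),\le k_i}(r)$. The key device is an \emph{ordering trick}: one orders the quantities
\[
N_i(r)=\sum_{u}N^{(n)}_{(f^u,H_i),\le k_i}(r)-(2n+1)N^{(1)}_{(f,H_i),\le k_i}(r)
\]
and passes to a set of $r$ of infinite measure on which $N_{\sigma(1)}\ge\cdots\ge N_{\sigma(2n+2)}$. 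Since the $(f^u,H_i)$ for $i\le n+1$ already span (the $H_i$ being in general position), the three indices $s,t,l$ needed for $P\not\equiv 0$ can all be taken among the $n+1$ largest, so $N_s+N_t\ge\frac{1}{n+1}\sum_iN_i$. Plugging this into Lemma~\ref{new1} and then applying the Second Main Theorem yields exactly the constant $\frac{n+1}{n(3n+1)}$.

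Your plan, by contrast, tries to force $\Phi^\alpha_{st}\equiv 0$ for every pair and then invoke Lemma~\ref{4.3}. There are two problems. First, the ``zero-versus-growth'' estimate you describe is not carried out, and the paper's own careful analysis of $\nu_{\Phi^\alpha_{ij}}$ (Lemma~\ref{4.8}) shows that the resulting inequality is substantially weaker than what you need: it involves the asymmetric subtractions $-(2n+1)N^{(1)}_i-(n+1)N^{(1)}_j$ and error terms from $S_i,S_j,R_i,R_j$, and in the paper it is only exploited \emph{after} one already knows $f^1\wedge f^2\wedge f^3=0$ and has additional structural information (Cases~2 and~3 of Theorem~\ref{1.2}). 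There is no indication that summing or optimizing over pairs $(s,t)$ produces the threshold $\frac{n+1}{n(3n+1)}$; the naive count you sketch gives roughly $\frac{2}{3}T(r)$ worth of zeros against $T(r)$ of growth, which is not enough. Second, even granting $\Phi^\alpha_{st}\equiv 0$ and $\Phi^\alpha_{ts}\equiv 0$ for all pairs, Lemma~\ref{4.3} gives a \emph{pairwise} dichotomy, and the ``which two coincide'' may vary with $(s,t)$; the paper's Lemmas~\ref{4.1}, \ref{4.2}, \ref{4.4} that resolve this kind of ambiguity all presuppose $f^1\wedge f^2\wedge f^3=0$, so you cannot invoke them here, and you have not supplied an alternative argument.

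In short, the missing idea is the direct determinant estimate (Lemma~\ref{new1}) combined with the ordering of the $N_i(r)$ to select favorable indices; this is what produces the sharp constant and avoids the delicate $\Phi^\alpha$ bookkeeping altogether.
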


\begin{proof}
By Lemma \ref{3.1}, we have that $f^s$ is linearly nondegenerate and $|| T_{f^s}(r)=O(T_f(r))$ and $|| T_f(r)=O(T_{f^s}(r))$ for all $s=1,2,3.$

Suppose that $f^1\wedge f^2\wedge f^3\not\equiv 0$. For each $1\leqslant i\leqslant 2n+2$, we set 
$$ N_i(r)=\sum_{u=1}^3N^{(n)}_{(f^u,H_i),\leqslant k_i}(r)-(2n+1)N^{(1)}_{(f,H_i),\leqslant k_i}(r) .$$
Here, we note that for positive integers $a,b,c$ we have $(\min\{a,b,c\}-1)\geqslant\min\{a,n\}+\min\{a,n\}+\min\{a,n\}-2n-1.$ Then
$$ \min\{\nu_{(f^u,H_i),\leqslant k_i}(z);1\leqslant u\leqslant 3\}-\nu^{(1)}_{(f,H_i),\leqslant k_i}(z)\geqslant\sum_{u=1}^3\nu^{(n)}_{(f^u,H_i),\leqslant k_i}(z)-(2n+1)\nu^{(1)}_{(f,H_i),\leqslant k_i}(z) $$
for all $z\in\supp\nu_{(f,H_i),\leqslant k_i}$. This yeilds that
\begin{align*}
 N(r,\min\{\nu_{(f^u,H_i),\leqslant k_i}(z)&;1\leqslant u\leqslant 3\})-N^{(1)}_{(f,H_i),\leqslant k_i}(r)\\
&\geqslant\sum_{u=1}^3N^{(n)}_{(f^u,H_i),\leqslant k_i}(r)-(2n+1)N^{(1)}_{(f,H_i),\leqslant k_i}(r)=N_i(r).
\end{align*}

We denote by $\mathcal I$ the set of all permutations of the $(2n+2)-$tuple $(1,\ldots ,2n+2)$, that means
$$ \mathcal I=\{I=(i_1,\ldots ,i_{2n+2})\ :\ \{i_1,\ldots ,i_{2n+2}\}=\{1,\ldots ,{2n+2}\}\}. $$
For each $I=(i_1,\ldots ,i_{2n+2})\in\mathcal I$ we define a subset $E_I$ of $[1,+\infty )$ as follows
$$ E_I=\{r\geqslant 1\ :\ N_{i_1}(r)\geqslant\cdots\geqslant N_{i_{2n+2}}(r)\}. $$
It is clear that $\bigcup_{I\in\mathcal I}E_I=[1,+\infty ).$ Therefore, there exists an element of $\mathcal I,$ for instance it is $I_0=(1,2,\ldots ,2n+2)$, satisfying
$$ \int\limits_{E_{i_0}}dr=+\infty . $$
Then, we have $N_1(r)\geqslant N_2(r)\geqslant\cdots\geqslant N_{2n+2}(r)$ for all $r\in E_{i_0}.$

We consider $\mathcal M^{3}$ as a vector space over the field $\mathcal M$. For each $i=1,\ldots ,2n+2,$ we set 
$$ V_i=\left ((f^1,H_i),(f^2,H_i),(f^3,H_i)\right )\in \mathcal M^{3}. $$ 
We put
$$ s=\min\{i\ :\ V_1\wedge V_i\not\equiv 0\}. $$
Since $f^1\wedge f^2\wedge f^3\not\equiv 0$, we have $1<s<n+1.$ Also by again  $f^1\wedge f^2\wedge f^3\not\equiv 0$, there exists an index $t\in\{s+1,\ldots,n+1\}$ such that $V_1\wedge V_s\wedge V_t\not\equiv 0$. This means that
$$P:=\det (V_1,V_s,V_t)=\det\left (
\begin{array}{ccc}
(f^1,H_1)&(f^1,H_s)&(f^1,H_t)\\ 
(f^2,H_1)&(f^2,H_s)&(f^2,H_t)\\
(f^3,H_1)&(f^3,H_s)&(f^3,H_t)
\end{array}\right )\not\equiv 0. $$

Set $T(r)=\sum_{u=1}^3T_{f^u}(r)$. By Lemma \ref{new1}, for $r\in E_{I_0}$ we have

\begin{align*}
T(r)&\geqslant\sum_{i=1,s,t}(N(r,\min\{\nu_{(f^u,H_i),\leqslant k_i};1\leqslant u\leqslant 3\})-N^{(1)}_{(f,H_i),\leqslant k_i}(r))\\
&+ 2\sum_{i=1}^qN^{(1)}_{(f,H_i),\leqslant k_i}(r)+o(T(r))\\
&\geqslant N_1(r)+N_s(r)+2\sum_{i=1}^qN^{(1)}_{(f,H_i)}(r)+o(T(r))\\
&\geqslant\dfrac{1}{n+1}\sum_{i=1}^{2n+2}N_i(r)+2\sum_{i=1}^{2n+2}N_{(f,H_i),\leqslant k_i}^{(1)}(r)+o(T(r)).\\
&=\dfrac{1}{n+1}\sum_{i=1}^{2n+2}\biggl (\sum_{u=1}^3N^{(n)}_{(f^u,H_i),\leqslant k_i}(z)-(2n+1)N^{(1)}_{(f,H_i)}(z)\biggl )+ 2\sum_{i=1}^{2n+2}N_{(f,H_i),\leqslant k_i}^{(1)}(r)\\
&=\dfrac{1}{n+1}\sum_{i=1}^{2n+2}\sum_{u=1}^3N^{(n)}_{(f^u,H_i),\leqslant k_i}(z)+\dfrac{1}{3(n+1)}\sum_{i=1}^{2n+2}\sum_{u=1}^3N^{(1)}_{(f^u,H_i),\leqslant k_i}(r)\\
&\geqslant (1+\dfrac{1}{3n})\dfrac{1}{n+1}\sum_{i=1}^{2n+2}\sum_{u=1}^3N^{(n)}_{(f^u,H_i),\leqslant k_i}(r)\\
&\geqslant (1+\dfrac{1}{3n})\dfrac{1}{n+1}\sum_{i=1}^{2n+2}\sum_{u=1}^3\biggl (N^{(n)}_{(f^u,H_i)}(r)-N^{(n)}_{(f^u,H_i),> k_i}(r)\biggl )\\
&\geqslant (1+\dfrac{1}{3n})\dfrac{1}{n+1}\sum_{u=1}^3\biggl (n+1-\sum_{i=1}^{2n+2}\dfrac{n}{k_i+1}\biggl )T_{f^u}(r)+o(T(r))\\
&= \bigl (1+\dfrac{1}{3n}-\dfrac{3n+1}{3(n+1)}\sum_{i=1}^{2n+2}\dfrac{1}{k_i+1}\bigl)T(r)+o(T(r)).
\end{align*}
Letting $r\rightarrow +\infty$ $(r\in E_{i_0})$ we get 
\begin{align*}
1\geqslant 1+\dfrac{1}{3n}-\dfrac{3n+1}{3(n+1)}\sum_{i=1}^{2n+2}\dfrac{1}{k_i+1}.
\end{align*}
Thus
$$\sum_{i=1}^{2n+2}\dfrac{1}{k_i+1}\geqslant \dfrac{n+1}{n(3n+1)}. $$

This is a contradiction. Hence, $f^1\wedge f^2\wedge f^3\equiv 0.$ 
The lemma is proved. 
\end{proof}

Now for three mappings $f^1, f^2, f^3 \in \mathcal {F}(f,\{H_i,k_i\}_{i=1}^{2n+2},1)$, we define:
\begin{align*}
F_k^{ij}&=\dfrac {(f^k,H_i)}{(f^k,H_j)}\ (0\leqslant k\leqslant 2,\ 1\leqslant i,j\leqslant 2n+2),\\ 
V_i&=( (f^1,H_i),(f^2,H_i),(f^3,H_i))\in\mathcal M_m^3,
\end{align*}

\begin{align*}
T_i&=\{z;\nu_{(f,H_i),\leqslant k_i}(z)>0\}, S_i=\bigcup_{u=1}^3\{z;\nu_{(f_u,H_i),> k_i}(z)>0\},\\
R_i&=\bigcap_{u=1}^3\{z;\nu_{(f_u,H_i),> k_i}(z)>0\},\\
\nu_i&=\{z;k_i\geqslant\nu_{(f^u,H_i)}(z)\geqslant \nu_{(f^v,H_i)}(z)=\nu_{(f^t,H_i)}(z) \text{ for a permutation }(u,v,t)\text{ of }(1,2,3)\}.
\end{align*}
We write $V_i\cong V_j$ if $V_i\wedge V_j\equiv 0$, otherwise we write $V_i\not\cong V_j.$ For $V_i\not\cong V_j$, we wirte $V_i\sim V_j$ if there exist $1\leqslant u<v\leqslant 3$ such that $F_u^{ij}=F_v^{ij}$, otherwise we write $V_i\not\sim V_j$.

\begin{lemma}\label{4.1} With the assumption of Theorem \ref{1.2}. Let $h$ and $g$ be two elements of the family $\mathcal F(f,\{H_i,k_i\}_{i=1}^{2n+2},1)$. If there exist a constant $\lambda$ and two indices $i,j$ such that $\dfrac{(h,H_i)}{(h,H_j)}=\lambda\dfrac{(g,H_i)}{(g,H_j)}$ then $\lambda =1$. 
\end{lemma}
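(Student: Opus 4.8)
The plan is to prove $\lambda=1$ by evaluating the given identity at a single, carefully chosen point. The key elementary fact is that for any map $h$ in the family and any point $z$ which is neither an indeterminacy point of $h$ nor a zero of $(h,H_i)$ or $(h,H_j)$, the value $\frac{(h,H_i)}{(h,H_j)}(z)$ depends only on the image $h(z)\in\P^n(\C)$: writing a reduced representation $h=(h_0:\cdots:h_n)$ and $H_i=\{\sum a_k^{(i)}\omega_k=0\}$, this ratio equals $\frac{\sum a_k^{(i)}h_k(z)}{\sum a_k^{(j)}h_k(z)}$, which is invariant under rescaling the homogeneous coordinates of $h(z)$. Consequently, if $z_2$ is a point with $h(z_2)=f(z_2)=g(z_2)$ and $f(z_2)\notin H_i\cup H_j$, then $\frac{(h,H_i)}{(h,H_j)}(z_2)=\frac{(f,H_i)}{(f,H_j)}(z_2)=\frac{(g,H_i)}{(g,H_j)}(z_2)$, and this common value is both finite and nonzero. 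Substituting into $\frac{(h,H_i)}{(h,H_j)}=\lambda\,\frac{(g,H_i)}{(g,H_j)}$ at $z_2$ immediately forces $\lambda=1$.

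Thus everything reduces to producing such a point $z_2$. By condition (b) in the definition of the family, $h=f=g$ on $\bigcup_l\{z;\nu_{(f,H_l),\le k_l}(z)>0\}$, so it suffices to find an index $l\notin\{i,j\}$ and a generic point $z_2$ of some component of $\supp\nu_{(f,H_l),\le k_l}$ at which $(f,H_i)$ and $(f,H_j)$ do not vanish and which avoids the (codimension $\ge 2$) indeterminacy loci of $f,g,h$. Equivalently, I must show that $\bigcup_{l\ne i,j}\supp\nu_{(f,H_l),\le k_l}$ is nonempty and \emph{not} entirely contained in $\{(f,H_i)=0\}\cup\{(f,H_j)=0\}$. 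This existence statement is the main obstacle.

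To settle it I would argue by contradiction and count. By Lemma \ref{3.1} (applicable since the hypothesis of Theorem \ref{1.2} gives $\sum 1/(k_l+1)<1/n$) the maps $g$ and $h$ are linearly nondegenerate, so all the ratios above are genuine meromorphic functions. Now suppose every irreducible component $Z$ of $\bigcup_{l\ne i,j}\supp\nu_{(f,H_l),\le k_l}$ lies in $\{(f,H_i)=0\}\cup\{(f,H_j)=0\}$. As these two hypersurfaces are distinct and meet in codimension $\ge 2$, each $Z$ lies in exactly one of them, say $Z\subseteq\{(f,H_i)=0\}$; moreover the dimension hypothesis forces $(f,H_i)$ to vanish to order $\ge k_i+1$ generically along $Z$, for otherwise $Z$ would lie in the codimension $\ge 2$ set $\{z;\nu_{(f,H_i),\le k_i}(z)\cdot\nu_{(f,H_l),\le k_l}(z)>0\}$. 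Grouping the components by which hypersurface absorbs them and using the first main theorem (Theorem \ref{2.7}), this multiplicity bound yields
\begin{align*}
\sum_{l\ne i,j}N^{(1)}_{(f,H_l),\le k_l}(r)\le\Bigl(\dfrac{1}{k_i+1}+\dfrac{1}{k_j+1}\Bigr)T_f(r)+O(1).
\end{align*}
On the other hand, applying the Second Main Theorem (Theorem \ref{2.8}) to the $2n$ hyperplanes $\{H_l\}_{l\ne i,j}$ and bounding $N^{(n)}_{(f,H_l)}(r)\le nN^{(1)}_{(f,H_l),\le k_l}(r)+\frac{n}{k_l+1}T_f(r)+O(1)$ gives $(n-1)T_f(r)\le n\sum_{m=1}^{2n+2}\frac{1}{k_m+1}T_f(r)+o(T_f(r))$, whence $\sum_m 1/(k_m+1)\ge\frac{n-1}{n}$, contradicting the smallness assumption. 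The same computation, without the absorption hypothesis, also shows $\bigcup_{l\ne i,j}\supp\nu_{(f,H_l),\le k_l}\ne\emptyset$. This produces the desired point $z_2$ and completes the proof; the only delicate point is the multiplicity bookkeeping that converts ``absorption into $\{(f,H_i)=0\}$'' into the factor $1/(k_i+1)$.
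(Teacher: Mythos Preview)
Your proof is correct and follows essentially the same route as the paper: both arguments reduce to showing that $\bigcup_{l\ne i,j}\supp\nu_{(f,H_l),\le k_l}$ cannot be absorbed into the high-multiplicity zero loci of $(f,H_i)$ and $(f,H_j)$ (the paper uses the analogous sets for $g,h$ instead of $f$), and both derive the contradiction from the Second Main Theorem applied to the remaining $2n$ hyperplanes. The only cosmetic differences are that you frame the conclusion via evaluating at a single point rather than assuming $\lambda\ne1$ directly, and your numerical threshold $\frac{n-1}{n}$ is slightly weaker than the paper's $\frac{n-1}{2n}$, but both comfortably contradict the hypothesis of Theorem~\ref{1.2}.
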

{\textit{Proof.}\ By Lemma \ref{3.1}, we see that $h$ and $g$ are linearly nondegenerate and have the characteristic functions of the same order with the characteristic function of $f$. Setting $ H=  \dfrac{(h,H_{i})}{(h,H_j)}\text{ and }G=\dfrac{(g,H_{i})}{(g,H_j)}$ and
\begin{align*}
&S_t'=\{z;\nu_{(h,H_t),>k_t}(z)>0\}\cup \{z;\nu_{(g,H_t),>k_t}(z)>0\}\quad (1\leqslant t\leqslant 2n+2) . 
\end{align*}
Then $H=\lambda G$. Supposing that $\lambda\ne 1$, since $H=G$ on the set $\bigcup_{t\ne i,j}T_t\setminus (S_i'\cup S_j')$, we have $\bigcup_{t\ne i,j}T_t\subset S_i'\cup S_j'$. Thus
\begin{align*}
0\ge&\sum_{t\ne i,j}N^{(1)}_{(f,H_t),\leqslant k_t}(r)-(N(r,S_i')+N(r,S_j'))\\
\ge&\dfrac{1}{2}\sum_{t\ne i,j}(N^{(1)}_{(h,H_t),\leqslant k_t}(r)+N^{(1)}_{(g,H_t),\leqslant k_t}(r))-(N(r,S_i')+N(r,S_j'))\\
\ge&\dfrac{1}{2n}\sum_{t\ne i,j}(N^{(n)}_{(h,H_t)}(r)+N^{(n)}_{(g,H_t)}(r))-\sum_{t=1}^{2n+2}(N^{(1)}_{(h,H_t),>k_t}(r)+N^{(1)}_{(g,H_t),>k_t}(r))\\
\ge&\dfrac{n-1}{2n}(T_h(r)+T_g(r))-\sum_{t=1}^{2n+2}\dfrac{1}{k_t+1}(T_h(r)+T_g(r))+o(T_f(r)).
\end{align*}
Letting $r\longrightarrow +\infty$, we get
$$\dfrac{n-1}{2n}\leqslant\sum_{t=1}^{2n+2}\dfrac{1}{k_t+1}.$$
This is a contradiction. Therefore $\lambda =1$. The lemma is proved \hfill$\square$
\begin{lemma}\label{4.2} Let $f^1,f^2,f^3$ be three elements of $\mathcal F(f,\{H_i,k_i\}_{i=1}^{2n+2},1)$, where $k_i\  (1\leqslant i\leqslant 2n+2)$ are positive integers or $+\infty$.
Suppose that $f^1\wedge f^2\wedge f^3\equiv 0$ and $V_i\sim V_j$ for some distinct indices $i$ and $j$. Then $f^1,f^2,f^2$ are not distinct. 
\end{lemma}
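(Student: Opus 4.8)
The plan is to prove the contrapositive by elementary linear algebra over the field $\mathcal M$ of meromorphic functions, exploiting the basic fact that $f^a\wedge f^b\equiv 0$ forces $f^a=f^b$ as maps into $\P^n(\C)$. So I would assume $f^1,f^2,f^3$ are pairwise distinct and derive a contradiction with the hypothesis $V_i\sim V_j$. Distinctness means that no two reduced representations are proportional over $\mathcal M$, hence $f^a\wedge f^b\not\equiv 0$ for every pair. Combined with the hypothesis $f^1\wedge f^2\wedge f^3\equiv 0$, this shows that $f^1,f^2,f^3$, viewed in $\mathcal M^{n+1}$, span a subspace of dimension exactly two, so there is a relation $c_1f^1+c_2f^2+c_3f^3\equiv 0$ with $c=(c_1,c_2,c_3)\in\mathcal M^3\setminus\{0\}$. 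If some coefficient $c_w$ vanished, the surviving relation among the other two maps would yield $f^a\wedge f^b\equiv 0$, contradicting distinctness; therefore all three coefficients $c_1,c_2,c_3$ are nonzero.

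Next I would transport this relation to the vectors $V_i$. Applying the linear form $(\,\cdot\,,H_i)$ to $c_1f^1+c_2f^2+c_3f^3\equiv 0$ gives $c_1(f^1,H_i)+c_2(f^2,H_i)+c_3(f^3,H_i)\equiv 0$ for every $i$, which says exactly that each $V_i$ lies in the two-dimensional subspace $c^{\perp}=\{x\in\mathcal M^3:\langle c,x\rangle=0\}$. I would then unpack the hypothesis $V_i\sim V_j$: by definition it requires both $V_i\wedge V_j\not\equiv 0$ and an equality $F_u^{ij}=F_v^{ij}$ for some $u<v$. Writing $w$ for the remaining index, and using that each $(f^u,H_j)\not\equiv 0$ by linear nondegeneracy (Lemma \ref{3.1}), the equality $F_u^{ij}=F_v^{ij}$ is precisely the vanishing of the $2\times 2$ minor $(f^u,H_i)(f^v,H_j)-(f^v,H_i)(f^u,H_j)$ formed from the $u$-th and $v$-th coordinates of $V_i$ and $V_j$.

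The decisive step is a projection argument. Let $\pi:\mathcal M^3\to\mathcal M^2$ be the projection onto the $u$-th and $v$-th coordinates. Because $c_w\neq 0$, the only vector of $c^{\perp}$ killed by $\pi$ is the zero vector, so $\pi$ restricts to an isomorphism of $c^{\perp}$ onto $\mathcal M^2$. The vanishing minor says that $\pi(V_i)$ and $\pi(V_j)$ are linearly dependent over $\mathcal M$; pulling this back through the isomorphism $\pi|_{c^{\perp}}$ forces $V_i$ and $V_j$ to be linearly dependent, i.e. $V_i\wedge V_j\equiv 0$, contradicting $V_i\sim V_j$. I do not expect any serious analytic obstacle, since the entire argument is linear algebra over $\mathcal M$; the only points demanding care are the bookkeeping that $(\,\cdot\,,H_i)$ correctly intertwines the relation on the $f^u$ with membership of $V_i$ in $c^{\perp}$, and the verification that the coordinate eliminated by the projection is indeed the nonzero coefficient $c_w$ (the index distinct from $u$ and $v$), which is what makes $\pi|_{c^{\perp}}$ injective.
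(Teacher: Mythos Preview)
Your proof is correct and follows essentially the same approach as the paper's: both argue by contradiction from distinctness to obtain a linear relation $c_1f^1+c_2f^2+c_3f^3\equiv 0$ with all coefficients nonzero, then combine this with the equality $F_u^{ij}=F_v^{ij}$ to force $V_i\cong V_j$, contradicting $V_i\not\cong V_j$. The paper carries this out concretely by writing $F_3^{tj}=\alpha F_1^{tj}+(1-\alpha)F_2^{tj}$ and substituting $t=i$, whereas you package the same computation as a projection argument; the underlying linear algebra is identical.
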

\textit{Proof.}\quad Suppose $f^1,f^2,f^2$ are distinct. Since $V_i\sim V_j$, we may suppose that $F_1^{ij}=F_2^{ij}\ne F_3^{ij}$. Since $f^1\wedge f^2\wedge f^3\equiv 0$ and $f^1\ne f^2$, there exists a meromorphic function $\alpha$ such that 
$$ F_3^{tj}=\alpha F_1^{tj}+(1-\alpha)F_2^{tj}\ (1\leqslant t\leqslant 2n+2). $$
This implies that $F_3^{ij}=F_1^{ij}=F^{ij}_2$. This is a contradiction. Hence $f^1,f^2,f^3$ are not distinct. The lemma is proved \hfill$\square$

\begin{lemma}\label{4.4} 
With the assumption of Theorem \ref{1.2}. Let $f^1,f^2,f^3$ be three maps in $\mathcal F(f,\{H_i,k_i\}_{i=1}^{2n+2},1)$. Suppose that $f^1,f^2,f^3$ are distinct and there are two indices $i, j \in \{1,2,\ldots ,2n+2\} \ (i\ne j)$ such that $V_i\not\cong V_j$ and
$$\Phi_{ij}^{\alpha}:= \Phi^{\alpha}(F_1^{ij},F_2^{ij},F_3^{ij})\equiv 0$$
for every $\alpha =(\alpha_1,\ldots ,\alpha_m)\in\Z^m_+$ with $|\alpha|=1$. Then for every $t\in\{1,\ldots ,2n+2\}\setminus \{i\}$, the following assertion hold:
\begin{itemize}
\item[(i)] $\Phi^{\alpha}_{it}\equiv 0$ for all $|\alpha| \leqslant 1,$
\item[(ii)] if $V_i\not\cong V_t$ then  $F_1^{ti},F_2^{ti},F_3^{ti}$ are distinct and
\begin{align*}
N^{(1)}_{(f,H_i),\leqslant k_i}(r)&\geqslant\sum_{s\ne i,t}N^{(1)}_{(f,H_s),\leqslant k_s}(r)-N^{(1)}_{(f,H_t),\leqslant k_t}(r)-2(N(r,S_i)+N(r,S_t))\\
&\geqslant \sum_{s\ne i,t}N^{(1)}_{(f,H_s),\leqslant k_s}(r)-N^{(1)}_{(f,H_t),\leqslant k_t}(r)-2\sum_{u=1}^3\sum_{s=i,t}N_{(f^u,H_s),\leqslant k_s}(r).
\end{align*} 
\end{itemize}
\end{lemma}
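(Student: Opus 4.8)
The plan is to split the statement into its algebraic core (assertion (i) and the distinctness in (ii)) and its metric part (the counting estimate in (ii)), and to exploit throughout the linear dependence $f^1\wedge f^2\wedge f^3\equiv 0$, which is available here because the hypotheses of Theorem \ref{1.2} include $\sum_i 1/(k_i+1)<(n+1)/(n(3n+1))$, so Lemma \ref{1.1} applies.

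First I would decode the hypothesis $\Phi^\alpha_{ij}\equiv 0$ for all $|\alpha|\leqslant 1$. Writing $w_u:=1/F_u^{ij}=(f^u,H_j)/(f^u,H_i)$ and performing the column operations $C_2\mapsto C_2-C_1$, $C_3\mapsto C_3-C_1$ in the determinant defining $\Phi^\alpha$, one sees that up to the nonvanishing factor $F_1^{ij}F_2^{ij}F_3^{ij}$ the function $\Phi^\alpha_{ij}$ equals the $2\times 2$ determinant of $w_2-w_1,\,w_3-w_1$ against $\mathcal D^\alpha$ (the case $|\alpha|=0$ being trivial, since then two rows coincide). Because $V_i\not\cong V_j$ forces the $w_u$ not to be all equal, the vanishing of all these determinants for $|\alpha|=1$ gives $\mathcal D^\alpha\log\frac{w_3-w_1}{w_2-w_1}\equiv 0$ for every $\alpha$, hence $w_3-w_1=c(w_2-w_1)$ with $c$ constant. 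Equivalently there are constants $\gamma_1,\gamma_2,\gamma_3$, not all zero, with $\gamma_1+\gamma_2+\gamma_3=0$ and $\sum_u\gamma_u w_u\equiv 0$; this is the analogue of Lemma \ref{4.3} in the present setting.

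For (i) I would feed in $f^1\wedge f^2\wedge f^3\equiv 0$, which yields meromorphic $c_1,c_2,c_3$, none identically zero (else two of the $f^u$ coincide), with $\sum_u c_u(f^u,H)\equiv 0$ for every hyperplane $H$. Setting $d_u:=c_u(f^u,H_i)$ and evaluating at $H_i$ and $H_j$ gives $\sum_u d_u=0$ and $\sum_u d_u w_u=0$. Thus both $(\gamma_u)$ and $(d_u)$ annihilate the two vectors $(1,1,1)$ and $(w_1,w_2,w_3)$, which are $\mathcal M$-independent precisely because $V_i\not\cong V_j$; hence the annihilator is one-dimensional over $\mathcal M$ and $(d_u)=\rho(\gamma_u)$ for some $\rho\in\mathcal M\setminus\{0\}$. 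Then for every $t$ we get $\sum_u\gamma_u\frac{(f^u,H_t)}{(f^u,H_i)}=\rho^{-1}\sum_u c_u(f^u,H_t)=0$, so the \emph{same} constants $\gamma_u$ give an affine relation among $F_1^{ti},F_2^{ti},F_3^{ti}$; reversing the Wronskian computation yields $\Phi^\alpha_{it}\equiv 0$ for all $|\alpha|\leqslant 1$, proving (i). The distinctness in (ii) follows from the same relation: no $\gamma_u$ can vanish, for if say $\gamma_3=0$ then $\gamma_1\big(\tfrac{(f^1,H_t)}{(f^1,H_i)}-\tfrac{(f^2,H_t)}{(f^2,H_i)}\big)\equiv 0$ for all $t$ with $\gamma_1\neq0$, forcing $(f^1,H_t)/(f^2,H_t)$ to be independent of $t$ and hence $f^1=f^2$, a contradiction; and with all $\gamma_u\neq0$ and $\sum\gamma_u=0$, any coincidence of two of the $F_u^{ti}$ would force the third to coincide too, against $V_i\not\cong V_t$. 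Hence $F_1^{ti},F_2^{ti},F_3^{ti}$ are distinct.

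Finally, for the estimate I would introduce $\phi:=F_1^{ti}-F_2^{ti}=\dfrac{(f^1,H_t)(f^2,H_i)-(f^2,H_t)(f^1,H_i)}{(f^1,H_i)(f^2,H_i)}\not\equiv 0$, whose numerator is the $(i,t)$-minor of $f^1\wedge f^2$. By the sharing hypothesis (b) one has $f^1=f^2$ on each $\{\nu_{(f,H_s),\leqslant k_s}>0\}$ off the codimension-two set $S$, so $\phi$ vanishes there; since these loci are pairwise disjoint (the dimension hypothesis) and $\phi$ is holomorphic and also vanishes on $\{\nu_{(f,H_t),\leqslant k_t}>0\}$, we get $\nu^0_\phi\geqslant\sum_{s\neq i}\nu^{(1)}_{(f,H_s),\leqslant k_s}$ off $S$, while the poles of $\phi$ are supported over $\supp\nu_{(f,H_i)}$, whose reduced counting function is at most $N^{(1)}_{(f,H_i),\leqslant k_i}(r)+N(r,S_i)$. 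Balancing zeros against poles and moving $N^{(1)}_{(f,H_t),\leqslant k_t}(r)$ to the right then gives the asserted inequality. The hard part will be the multiplicity bookkeeping at $H_i$: since the family is defined only at truncation level $d=1$, the divisors $(f^u,H_i)$ share support but not multiplicities, so the poles of $\phi$ over $H_i$ need not be simple, and one must reduce their contribution to $N^{(1)}_{(f,H_i),\leqslant k_i}(r)$ while absorbing into the correction $2(N(r,S_i)+N(r,S_t))$ both the excess multiplicities and the high-multiplicity loci $S_i,S_t$ on which (b) gives no information; checking that the proximity contribution arising from the balancing does not spoil the inequality is the remaining delicate point.
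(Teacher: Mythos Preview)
Your algebraic treatment of (i) and of the distinctness in (ii) is correct and is essentially a repackaging of the paper's argument: the paper computes directly that $\dfrac{F_3^{ji}-F_1^{ji}}{F_2^{ji}-F_1^{ji}}=c$ is constant, then uses $\det(V_i,V_j,V_t)=0$ (from $f^1\wedge f^2\wedge f^3\equiv 0$) to deduce $\dfrac{F_3^{ti}-F_1^{ti}}{F_2^{ti}-F_1^{ti}}=c$ for every $t$, whence $\Phi^\alpha_{it}\equiv 0$ and $c\notin\{0,1\}$ gives distinctness. Your annihilator/$(\gamma_u)$ formulation is the same content.

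The genuine gap is in the counting estimate. Your plan is to balance zeros against poles of $\phi=F_1^{ti}-F_2^{ti}$ and to ``absorb the excess multiplicities'' of the poles over $T_i$ into $2(N(r,S_i)+N(r,S_t))$. This cannot work: at a point $z\in T_i\setminus S_i$ the multiplicities $\nu_{(f^1,H_i)}(z),\nu_{(f^2,H_i)}(z)$ agree only to truncation level $1$, so $\phi$ has a pole of order $\max\{\nu_{(f^1,H_i)}(z),\nu_{(f^2,H_i)}(z)\}$, which may be any integer up to $k_i$. The accumulated excess $N(r,\nu^\infty_\phi)-N^{(1)}(r,\nu^\infty_\phi)$ over $T_i$ is of size comparable to $N_{(f,H_i),\leqslant k_i}(r)-N^{(1)}_{(f,H_i),\leqslant k_i}(r)$, which is not $o(T(r))$ and is not controlled by $N(r,S_i)$ (the latter concerns only the locus $\{\nu_{(f^u,H_i)}>k_i\}$). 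Likewise the proximity contribution $m(r,\phi)-m(r,1/\phi)$ carries no sign and can be as large as $T(r,\phi)$, so it cannot be discarded.

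The paper avoids this entirely by passing to the map $F^t=(F_1^{ti}h_t:F_2^{ti}h_t):\C^m\to\P^1(\C)$ (with $h_t$ clearing common factors to get a reduced representation) and exploiting that, by the affine relation $cF_2^{ti}+(1-c)F_1^{ti}=F_3^{ti}$ with $c\notin\{0,1\}$, the three functions $F_u^{ti}h_t$ correspond to three \emph{distinct} values of $F^t$ in $\P^1$. The Second Main Theorem on $\P^1$ then gives $T_{F^t}(r)\leqslant\sum_{u=1}^3 N^{(1)}_{F_u^{ti}h_t}(r)+o(T(r))$, which supplies the truncation to level $1$ for free; combining with the First Main Theorem bound $T_{F^t}(r)\geqslant N_{(F_1^{ti}-F_2^{ti})h_t}(r)$ and the pointwise estimates (the paper's (\ref{4.5}) and (\ref{4.7})) yields the inequality. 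The missing idea in your proof is precisely this use of $F_3^{ti}$ as a third value to invoke the Second Main Theorem and obtain truncated counting functions.
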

 \textit{Proof.} By the supposition $V_i\not\cong V_j$, we may assume that $F_2^{ji}-F_1^{ji}\ne 0$. 

(a) For all $\alpha\in \Z^m_+$ with $|\alpha|=1$, we have $\Phi_{ij}^{\alpha}=0$, and hence
\begin{align*}
\mathcal {D}^{\alpha}\biggl(\dfrac {F_3^{ji}-F_1^{ji}}{F_2^{ji}-F_1^{ji}}\biggl)=&\dfrac{1}{(F_2^{ji}-F_1^{ji})^2}\cdot\biggl ((F_2^{ji}-F_1^{ji})\cdot \mathcal {D}^{\alpha}(F_3^{ji}-F_1^{ji})\\
&\hspace{90pt}-(F_3^{ji}-F_1^{ji})\cdot \mathcal {D}^{\alpha}(F_2^{ji}-F_1^{ji})\biggl)\\
=&\dfrac{1}{{(F_2^{ji}-F_1^{ji})^2}}\cdot\left | 
\begin {array}{cccc}
1&1&1\\
F_1^{ji}&F_2^{ji} &F_3^{ji}\\
\mathcal {D}^{\alpha}(F_1^{ji}) &\mathcal {D}^{\alpha}(F_2^{ji}) &\mathcal {D}^{\alpha}(F_3^{ji})
\end {array}
\right| =0.
\end{align*}
Since the above equality hold for all $|\alpha|=1$, then there exists a constant $c\in \C$ such that
\begin{align*}
\dfrac {F_3^{ji}-F_1^{ji}}{F_2^{ji}-F_1^{ji}}=c
\end{align*}
By Theorem \ref{1.1}, we have $f^1\wedge f^2\wedge f^3=0.$ Then for each index $t\in\{1,\ldots ,2n+2\}\setminus\{i,j\}$ we have
\begin{align*}
0&= \det\left (
\begin{array}{ccc}
(f_1,H_i)&(f_1,H_j)&(f_1,H_{t})\\ 
(f_2,H_i)&(f_2,H_j)&(f_2,H_{t})\\
(f_3,H_i)&(f_3,H_j)&(f_3,H_{t})
\end{array}\right )
=\prod_{u=1}^3(f^u,H_i)\cdot\det\left (
\begin{array}{ccc}
1&F_1^{ji}&F_1^{ti}\\
1&F_2^{ji}&F_2^{ti}\\
1&F_3^{ji}&F_3^{ti}\\
\end{array}\right )\\
&=\prod_{u=1}^3(f^u,H_i)\cdot\det\left (
\begin{array}{ccc} 
F_2^{ji}-F_1^{ji}&F_2^{ti}-F_1^{ti}\\
F_3^{ji}-F_1^{ji}&F_3^{ti}-F_1^{ti}\\
\end{array}\right ).
\end{align*}
Thus
$$ (F_2^{ji}-F_1^{ji})\cdot (F_3^{ti}-F_1^{ti})= (F_3^{ji}-F_1^{ji})\cdot (F_2^{ti}-F_1^{ti}).$$
If $F_2^{ti}-F_1^{ti}=0$ then $F_3^{ti}-F_1^{ti}=0$, and hence $\Phi^{\alpha}_{it}=0$ for all $\alpha\in\Z^m_+$ with $|\alpha|<1$. Otherwise, we have
$$ \dfrac{F_3^{ti}-F_1^{ti}}{F_2^{ti}-F_1^{ti}}=\dfrac{F_3^{ji}-F_1^{ji}}{F_2^{ji}-F_1^{ji}}=c.$$
This also implies that
\begin{align*}
\Phi^{\alpha}_{it}&=F_1^{it}\cdot F_2^{it} \cdot F_3^{it}\cdot
\left |
\begin{array}{ccc}
1&1&1\\
F_1^{ti}&F_2^{ti} &F_3^{ti}\\
\mathcal {D}^{\alpha}(F_1^{ti}) &\mathcal {D}^{\alpha}(F_2^{ti}) &\mathcal {D}^{\alpha}(F_3^{ti})\\
\end {array}
\right|\\
&=F_1^{it}\cdot F_2^{it} \cdot F_3^{it}\cdot
\left |
\begin{array}{cc}
F_2^{ti}-F_1^{ti} &F_3^{ti}-F_1^{ti}\\
\mathcal {D}^{\alpha}(F_2^{ti}-F_1^{ti})  &\mathcal {D}^{\alpha}(F_3^{ti}-F_1^{ti})\\
\end {array}
\right|\\
&=F_1^{it}\cdot F_2^{it} \cdot F_3^{it}\cdot
\left |
\begin{array}{cc}
F_2^{ti}-F_1^{ti} &c(F_2^{ti}-F_1^{ti})\\
\mathcal {D}^{\alpha}(F_2^{ti}-F_1^{ti})  &c\mathcal {D}^{\alpha}(F_2^{ti}-F_1^{ti})
\end {array}
\right|=0.
\end{align*}
Then one always has $\Phi^{\alpha}_{it}=0$ for all $t\in\{1,\ldots ,2n+2\}\setminus \{i\}$. The first assertion is proved.

(b) We suppose that $V_i\not\cong V_t$. From the above part, we have
$$ c F_2^{si}+(1-c)F_1^{si}=F_3^{si}\ (s\ne i).$$
By the supposition $f^1,f^2,f^3$ are distinct, we have $c\not\in\{0,1\}$. This implies that $F_1^{ti},F_2^{ti},F_3^{ti}$ are distinct. 

We see that the second inequality is clear, then we prove the remain first inequality. We consider the meromorphic mapping $F^t$ of $\C^m $ into $ \P^1(\C)$ with a reduced representation 
$$F^t=(F_1^{ti}h_t:F_2^{ti}h_t),$$
where $h_t$ is a meromorphic function on $\C^m$. We see that 
\begin{align*}
T_{F^t}(r)=&T\biggl(r,\dfrac {F_1^{ti}}{F_2^{ti}}\biggl) \leqslant T(r,F_1^{ti})+T\biggl(r,\dfrac {1}{F_2^{ti}}\biggl)+O(1)\\ 
& \leqslant T(r,F_1^{ti})+T(r,F_2^{ti})+O(1)\leqslant T_{f^1}(r)+T_{f^2}(r)+O(1)=O(T_f(r)).
\end{align*}

For a point $z\not\in I(F^t)\cup S_i\cup S_t$ which is a zero of some functions $F_u^{ti}h_t\ (1\leqslant u\leqslant 3)$, then $z$ must be either zero of $(f,H_i)$ with multiplicity at most $k_i$ or zero of $(f,H_t)$ with multiplicity at most $k_t$, and hence
$$ \sum_{u=1}^3\nu^{(1)}_{F_u^{ti}h_t}(z)=1\leqslant \nu^{(1)}_{(f,H_i),\leqslant k_i}(z)+\nu^{(1)}_{(f,H_t),\leqslant k_t}(z). $$
This implies that
$$ \sum_{u=1}^3\nu^{(1)}_{F_u^{ti}h_t}(z)\leqslant \nu^{(1)}_{(f,H_i),\leqslant k_i}(z)+\nu^{(1)}_{(f,H_t),\leqslant k_t}(z)+\chi_{S_i}(z)+\chi_{S_t}(z) $$
outside an analytic subset of codimension two. By integrating both sides of this inequality, we get
\begin{align}\label{4.5}
\sum_{u=1}^3N^{(1)}_{F_u^{ti}h_t}(r)\leqslant N^{(1)}_{(f,H_i),\leqslant k_i}(r)+N^{(1)}_{(f,H_t),\leqslant k_t}(r)+N(r,S_i)+N(r,S_t).
\end{align}

By the second main theorem, we also have
\begin{align}\label{4.6}
||\ T_{F^t}(r)\leqslant\sum_{u=1}^3N^{(1)}_{F_u^{ti}h_t}(r)+o(T(r)).
\end{align}

On the other hand, applying the first main theorem to the map $F^t$ and the hyperplane  $\{w_0-w_1=0\}$ in $\P^1(\C),$ we have
\begin{align}\label{4.7}
T_{F^t}(r)&\geqslant N_{(F_1^{ti}-F_2^{ti})h_t}(r)\geqslant \sum_{\underset{v\ne i,t}{v=1}}^{2n+2}N^{(1)}_{(f,H_v),\leqslant k_v}(r)-N(r,S_i)-N(r,S_t).
\end{align}
Therefore, from (\ref{4.5}), (\ref{4.6}) and (\ref{4.7}) we have
\begin{align*}
||\ N^{(1)}_{(f,H_i),\leqslant k_i}(r)\geqslant \sum_{\underset{v\ne i,t}{v=1}}^{2n+2}N^{(1)}_{(f,H_v),\leqslant k_v}(r)-N^{(1)}_{(f,H_t),\leqslant k_t}(r)-2(N(r,S_i)+N(r,S_t))+o(T(r)).
\end{align*}
The second assertion of the lemma is proved.\hfill$\square$

\begin{lemma}\label{4.8}
With the assumption of Theorem \ref{1.2}, let $f^1,f^2,f^3$ be three meromorphic mappings in $\mathcal F(f,\{H_i,k_i\}_{i=1}^{2n+2},1)$.
Assume that there exist $i, j \in \{1,2,\ldots ,2n+2\} \ (i\ne j)$ and  $\alpha\in\Z^m_+$ with $|\alpha|=1$ such that
$\Phi^{\alpha}_{ij}\not\equiv 0.$
Then we have
\begin{align*}
T(r)&\geqslant \sum_{u=1}^{3}N^{(n)}_{(f^u,H_i),\leqslant k_i}(r)+\sum_{k=1}^3 N^{(n)}_{(f^k,H_j),\leqslant k_j}(r)+2\sum_{\overset{t=1}{t\ne i,j}}^{2n+2}N_{(f,H_t),\leqslant k_t}^{(1)}(r) \\
&\ \ \ -(2n+1)N^{(1)}_{(f,H_i),\leqslant k_i}(r)-(n+1)N^{(1)}_{(f,H_j),\leqslant k_j}(r)+N(r,\nu_j)\\
&\ \ \  -N(r,S_i)-N(r,S_j)-(2n-2)N(r,R_i)-(n-1)N(r,R_j)+o(T(r))\\
&\geqslant \sum_{u=1}^{3}N^{(n)}_{(f^u,H_i),\leqslant k_i}(r)+\sum_{k=1}^3 N^{(n)}_{(f^k,H_j),\leqslant k_j}(r)+2\sum_{\overset{t=1}{t\ne i,j}}^{2n+2}N_{(f,H_t),\leqslant k_t}^{(1)}(r) \\
&\ \ \ -(2n+1)N^{(1)}_{(f,H_i),\leqslant k_i}(r)-(n+1)N^{(1)}_{(f,H_j),\leqslant k_j}(r)+N(r,\nu_j)\\
&\ \ \ \ -\sum_{u=1}^3\bigl ((1+\dfrac{n-1}{3})N^{(1)}_{(f^u,H_j),> k_j}-(1+\dfrac{2n-2}{3})N_{(f^u,H_i),> k_i}\bigl )+o(T(r)).
\end{align*}
\end{lemma}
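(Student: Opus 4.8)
The plan is to play the assumed nonvanishing function $\Phi:=\Phi^\alpha_{ij}$ off against itself: a pointwise lower bound on its zero divisor coming from the coincidence conditions (a) and (b), set against an upper bound on its growth coming from Jensen's formula and the logarithmic derivative lemma. The starting point is the reduction, obtained by subtracting the first column from the other two, that $\Phi$ equals $F_1^{ij}F_2^{ij}F_3^{ij}$ times the $\mathcal D^\alpha$-Wronskian $W$ of $g_2-g_1$ and $g_3-g_1$, where $g_u:=(f^u,H_j)/(f^u,H_i)=1/F_u^{ij}$; equivalently, up to a harmless sign, $\Phi$ is the determinant whose $u$-th row is $(F_u^{ij},\,1,\,\mathcal D^\alpha F_u^{ij}/F_u^{ij})$.

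\emph{Growth bound.} First I would clear denominators by setting $\tilde\Phi:=\Phi\cdot\prod_u (f^u,H_j)$, whose $u$-th row then reads $((f^u,H_i),\,(f^u,H_j),\,(f^u,H_j)\,\mathcal D^\alpha F_u^{ij}/F_u^{ij})$. Each entry is homogeneous of degree one in the coordinates of $f^u$ up to a logarithmic-derivative factor, so Hadamard's inequality together with the logarithmic derivative lemma gives $m(r,\tilde\Phi)\le T(r)+o(T(r))$. Jensen's formula, applied to $\tilde\Phi$ and to each holomorphic $(f^u,H_j)$, then yields $N^0_\Phi(r)-N^\infty_\Phi(r)+\sum_u N_{(f^u,H_j)}(r)\le T(r)+o(T(r))$; thus the total counting of zeros of $\Phi$ is controlled by $T(r)$, once the poles of $\Phi$, located along the zero sets of the $(f^u,H_i)$ and $(f^u,H_j)$, are accounted for.

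\emph{Zero bound.} Next I would estimate $\nu^0_\Phi$ from below point by point, outside the codimension-two set of indeterminacy and of the forbidden intersections, in three loci. (i) At a zero of $(f,H_t)$ of multiplicity $\le k_t$ with $t\ne i,j$: by (b) all three $f^u$ coincide on that hypersurface, so $g_2-g_1$ and $g_3-g_1$ both vanish on it; factoring out the local defining function shows $W$, hence $\Phi$, vanishes to order $\ge 2$, producing $2\sum_{t\ne i,j}N^{(1)}_{(f,H_t),\le k_t}(r)$. (ii) At a zero of $(f,H_i)$ of multiplicity $\le k_i$ with local multiplicities $a_u=\nu_{(f^u,H_i)}$: here $F_u^{ij}$ vanishes to order $a_u$ while $W$ carries a pole whose order is governed by $\max_u a_u$; after truncation the net order of $\Phi$ contributes $\sum_u N^{(n)}_{(f^u,H_i),\le k_i}(r)$ against the correction $-(2n+1)N^{(1)}_{(f,H_i),\le k_i}(r)$. (iii) The analogous but more delicate analysis at the zeros of $(f,H_j)$ produces $\sum_u N^{(n)}_{(f^u,H_j),\le k_j}(r)$ with correction $-(n+1)N^{(1)}_{(f,H_j),\le k_j}(r)$, together with the gain $N(r,\nu_j)$, which appears precisely when the two smallest of the $\nu_{(f^u,H_j)}$ coincide (the defining property of $\nu_j$), forcing extra cancellation in the leading term of $W$.

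\emph{Combination and obstacle.} Finally I would insert the zero bound into the growth bound; the pole contributions of $\Phi$ along the high-multiplicity loci $S_i,S_j$ and their triple overlaps $R_i,R_j$ survive as $-N(r,S_i)-N(r,S_j)-(2n-2)N(r,R_i)-(n-1)N(r,R_j)$, giving the first inequality. The second inequality is then routine: since $S_\bullet=\bigcup_u\{z;\nu_{(f^u,H_\bullet),>k_\bullet}(z)>0\}$ and $R_\bullet=\bigcap_u\{z;\nu_{(f^u,H_\bullet),>k_\bullet}(z)>0\}$, one has $N(r,S_\bullet)\le\sum_u N^{(1)}_{(f^u,H_\bullet),>k_\bullet}(r)$ and $N(r,R_\bullet)\le\tfrac13\sum_u N^{(1)}_{(f^u,H_\bullet),>k_\bullet}(r)$, which absorb into the stated coefficients $1+\tfrac{n-1}{3}$ and $1+\tfrac{2n-2}{3}$. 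I expect the main obstacle to be exactly step (iii): correctly tracking, under truncation both to level $n$ and to level $k_j$, the order of the pole of $W$ at the zeros of $(f,H_j)$ when the multiplicities $\nu_{(f^u,H_j)}$ differ across $u$. This is the estimate mishandled in \cite{SQ12} (the inequality (3.26) there), and the repair manifests precisely as the extra $N(r,\nu_j)$ term and the asymmetry between the $H_i$- and $H_j$-corrections.
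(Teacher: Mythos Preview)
Your overall architecture matches the paper's: bound the growth of $\Phi^\alpha_{ij}$ via the logarithmic derivative lemma to obtain $T(r)\ge N(r,\nu_{\Phi^\alpha_{ij}})+\sum_u N_{1/F_u^{ij}}(r)+o(T(r))$, then establish a pointwise lower bound on the divisor $\nu_{\Phi^\alpha_{ij}}$ by case analysis. The paper derives the growth bound directly from the expansion (4.9) and $m(r,\Phi^\alpha_{ij})\le\sum_u m(r,F_u^{ij})+o(T(r))$ rather than through your auxiliary $\tilde\Phi$, but the two routes yield the same inequality. Your diagnosis of case (iii) is also correct: the extra $+N(r,\nu_j)$ arises exactly when the two smallest of the $\nu_{(f^u,H_j)}$ coincide, via a factorisation $F_u^{ji}=h^{d_u}\varphi_u$ in local coordinates.

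The genuine gap is in your handling of the $S_i,S_j,R_i,R_j$ terms. Your three loci (i)--(iii) only cover points of $T_t$ lying outside $S_i\cup S_j$; the paper needs six cases, and the coefficients $(2n-2)$ on $R_i$ and $(n-1)$ on $R_j$ do not arise simply as ``pole contributions along triple overlaps.'' They come from the overlap cases $z\in T_j\cap S_i$ and $z\in T_i\cap S_j$, and depend on the observation---which you have not made---that conditions (a) and (b) force $T_s\cap S_j\subset R_j$ (and symmetrically $T_s\cap S_i\subset R_i$) for every $s\ne j$, outside codimension two. Indeed, if $z\in T_s$ then $f^1(z)=f^2(z)=f^3(z)$, so all three $(f^u,H_j)$ vanish at $z$ or none does; combined with (a) (equality of the level-one truncated divisors $\nu^{(1)}_{(\cdot,H_j),\le k_j}$), one $f^u$ having multiplicity $>k_j$ forces all three to. Only with this inclusion can the potentially large pole of $\Phi$ at such points be controlled by the $R$-terms with the stated coefficients (the paper's Cases 4 and 5). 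You also need to treat $z\in (S_i\cup S_j)\setminus\bigcup_t T_t$ separately (the paper's Case 6), which is what produces the bare $-N(r,S_i)-N(r,S_j)$.
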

\textit{Proof.} The second inequality is clear. We remain prove the first inequality. We have
\begin{align*} \Phi^{\alpha}
&=F_1^{ij}\cdot F_2^{ij}\cdot F_3^{ij}\cdot 
\left | 
\begin {array}{cccc}
1&1&1\\
F_1^{ji}&F_2^{ji} &F_3^{ji}\\
\mathcal {D}^{\alpha}(F_1^{ji}) &\mathcal {D}^{\alpha}(F_2^{ji}) &\mathcal {D}^{\alpha}(F_3^{ji})\\
\end {array}
\right|\\
&=\left | 
\begin {array}{cccc}
F_1^{ij}&F_2^{ij} &F_3^{ij}\\
1&1&1\\
F_1^{ij}\mathcal {D}^{\alpha}(F_2^{ji}) &F_2^{ij}\mathcal {D}^{\alpha}(f^{ji}) &F_3^{ij}\mathcal {D}^{\alpha}(g^{ji})
\end {array}
\right|
\end{align*}
Thus
\begin{align}\label{4.9}
\begin{split}
\Phi_{ij}^{\alpha}&=F_1^{ij}\biggl(\dfrac{\mathcal {D}^{\alpha}(F_3^{ji})}{F^{ji}_3}-\dfrac{\mathcal {D}^{\alpha}(F_2^{ji})}{F^{ji}_2}\biggl)
+F^{ij}_2\biggl(\dfrac{\mathcal {D}^{\alpha}(F_1^{ji})}{F^{ji}_1}-\dfrac{\mathcal {D}^{\alpha}(F_3^{ji})}{F^{ji}_3}\biggl)\\
&\ \ \ +F^{ij}_3\biggl(\dfrac{\mathcal {D}^{\alpha}(F_2^{ji})}{F^{ji}_2}-\dfrac{\mathcal {D}^{\alpha}(F_1^{ji})}{F^{ji}_1}\biggl).
\end{split}
\end{align}
By the Logarithmic Derivative Lemma, it follows that
\begin{align*}
m(r,\Phi^\alpha_{ij})\leqslant\sum_{u=1}^3m(r,F_u^{ij})+2\sum_{u=1}^{3}m\biggl(\dfrac{\mathcal{ D}^{\alpha}(F_u^{ji})}{F_v^{ji}}\biggl)+O(1)
\leqslant\sum_{u=1}^3m(r,F_u^{ij})+o(T_f(r)).
\end{align*}
Therefore, we have
\begin{align*}
T(r)&\geqslant\sum_{u=1}^3T(r,F_u^{ij})=\sum_{u=1}^3(m(r,F_u^{ij})+N_{\frac{1}{F_u^{ij}}}(r))=m(r,\Phi^\alpha_{ij})+\sum_{u=1}^3N_{\frac{1}{F_u^{ij}}}(r)+o(T(r))\\
&\geqslant T(r,\Phi^\alpha_{ij})-N_{\frac{1}{\Phi^\alpha_{ij}}}+\sum_{u=1}^3N_{\frac{1}{F_u^{ij}}}(r)+o(T(r))\\
&\geqslant N_{\Phi^\alpha_{ij}}(r)-N_{\frac{1}{\Phi^\alpha_{ij}}}+\sum_{u=1}^3N_{\frac{1}{F_u^{ij}}}(r)+o(T(r))\\
&= N(r,\nu_{\Phi^\alpha_{ij}})+\sum_{u=1}^3N_{\frac{1}{F_u^{ij}}}(r)+o(T(r)).
\end{align*}
Then, in order to prove the lemma, it is sufficient for us to prove 
\begin{align}\notag
N(r,\nu_{\Phi^\alpha_{ij}})&\geqslant \sum_{u=1}^{3}N^{(n)}_{(f^u,H_i),\leqslant k_i}(r)+\sum_{k=1}^3 N^{(n)}_{(f^k,H_j),\leqslant k_j}(r)+2\sum_{\overset{t=1}{t\ne i,j}}^{2n+2}N_{(f,H_t),\leqslant k_t}^{(1)}(r) \\
\notag
&\ \ \ -(2n+1)N^{(1)}_{(f,H_i),\leqslant k_i}(r)-(n+1)N^{(1)}_{(f,H_j),\leqslant k_j}(r)-\sum_{u=1}^3N_{\frac{1}{F_u^{ij}}}(r)+N(r,\nu_j)\\
\label{4.10}
&\ \ \  -N(r,S_i)-N(r,S_j)-(2n-2)N(r,R_i)-(n-1)N(r,R_j)+o(T(r)).
\end{align}

Denote by $S$ the set of all singularities of $f^{-1}(H_t)\ (1\leqslant t\leqslant q)$. Then $S$ is an analytic subset of codimension at least two in $\C^m$. We set 
$$I=S\cup \bigcup_{s\ne t}\{z;\nu_{(f,H_s),\leqslant k_s}(z)\cdot\nu_{(f,H_t),\leqslant k_t}(z)>0\}.$$
Then $I$ is also an analytic subset of codimension at least two in $\C^m$. 

In order to prove the inequality (\ref{4.10}), it is sufficient for us to show that the following inequality 
\begin{align}\label{4.11}
P:\overset{Def}= &\sum_{u=1}^{3}\nu^{(n)}_{(f^u,H_i),\leqslant k_i}+\sum_{u=1}^3\nu^{(n)}_{(f^k,H_j),\leqslant k_j}+2\sum_{\overset{t=1}{t\ne i,j}}^{2n+2}\chi_{T_t}-(2n+1)\chi_{T_i}-(n+1)\chi_{T_j}\\
\notag
&-\sum_{u=1}^3\nu^\infty_{F_u^{ij}}+\chi_{\nu_j}-\chi_{S_i}-\chi_{S_j}-2(n-1)\chi_{R_i}-(n-1)\chi_{R_j}\leqslant \nu_{\Phi^\alpha_{ij}}.
\end{align}
holds outside the set $I$. 

Indeed, for $z\not\in I$, we distinguish the following cases:

\vskip0.2cm
\noindent
\textit{Case 1:}  $z\in T_t\setminus S_i\cup S_j \ (t\ne i,j)$. We see that $P(z)=2$.  We write $\Phi^\alpha_{ij}$ in the form
$$ \Phi^{\alpha}_{ij}
=F_1^{ij}\cdot F_2^{ij}\cdot F_3^{ij}
\times\left | 
\begin {array}{cccc}
 \bigl (F_1^{ji}-F_{2}^{ji}\bigl ) & \bigl (F_1^{ji}-F_{3}^{ji}\bigl )\\
\mathcal{D}^{\alpha}\bigl ( F_1^{ji}-F_{2}^{ji}\bigl ) & \mathcal{D}^{\alpha}\bigl ( F_1^{ji}-F_{3}^{ji}\bigl )
\end {array}
\right|. $$
Then  by the assumption that $f^1,f^2,f^3$ are identify on $T_t$, we have $F_1^{ji}=F_2^{ji}=F_3^{ji}$ on $T_t\setminus S_i$. The property of the wronskian implies that $\nu_{\Phi^{\alpha}_{ij}}(z)\geqslant 2=P(z)$.
 
\vskip0.2cm
\noindent
\textit{Case 2:} $z\in T_t\cap (S_i\cup S_j)\ (t\ne i,j)$. We see that $P(z)\leqslant -\sum_{u=1}^3\nu^\infty_{F_u^{ij}}(z)-1.$ 

From (\ref{4.9}) we see that 
$$ \nu_{\Phi^{\alpha}_{ij}}(z)\geqslant\min\{\nu_{F_1^{ij}}(z)-1,\nu_{F_2^{ij}}(z)-1,\nu_{F_3^{ij}}(z)-1\}\geqslant P(z). $$

\vskip0.2cm
\noindent
\textit{Case 3:} $z\in T_i\setminus S_j$. We have 
$$P(z)= \sum_{u=1}^3\nu^{(n)}_{(f^u,H_i),\leqslant k_i}(z)-(2n+1)\leqslant\min_{1\leqslant u\leqslant 3}\{\nu^{(n)}_{(f^u,H_i),\leqslant k_i}(z)\}-1.$$
We may assume that $\nu_{(f^1,H_{i})}(z)\leqslant\nu_{(f^2,H_{i})}(z)\leqslant\nu_{(f^3,H_{i})}(z)$. We write
\begin{align*}
\Phi^{\alpha}_{ij}
=F_1^{ij}\biggl [F_2^{ij}(F_1^{ji}-F_{2}^{ji})F_3^{ij}\mathcal{D}^{\alpha} (F_1^{ji}-F_{3}^{ji})-F_3^{ij}(F_1^{ji}-F_{2}^{ji})F_2^{ij}\mathcal{D}^{\alpha} (F_1^{ji}-F_{2}^{ji})\biggl ]
\end{align*}
It is easy to see that $F_2^{ij}(F_1^{ji}-F_{2}^{ji})$ and $F_3^{ij}(F_1^{ji}-F_{3}^{ji})$ are holomorphic on a neighborhood of $z$ and 
$$ \nu^{\infty}_{F_3^{ij}\mathcal{D}^{\alpha}(F_1^{ji}-F_{3}^{ji})}(z)\leqslant 1 $$
and 
$$ \nu^{\infty}_{F_2^{ij}\mathcal{D}^{\alpha} (F_1^{ji}-F_{2}^{ji})}(z)\leqslant 1. $$
Therefore, it implies that
\begin{align*}
\nu_{\Phi^{\alpha}_{ij}}(z)&\geqslant \nu^{(n)}_{(f^1,H_i),\leqslant k_i}(z)-1\geqslant P(z).
\end{align*}

\vskip0.2cm
\noindent
\textit{Case 4:} $z\in T_i\cap S_j$. The assumption that $f^1,f^2,f^3$ are identity on $T_i$ yields that $z\in R_j$. We have
$$P(z)\leqslant \sum_{u=1}^3\nu_{(f^u,H_i),\leqslant k_i}^{(n)}(z)-\sum_{u=1}^3\nu^\infty_{F_u^{ij}}(z)-(2n+1)-n\leqslant -\sum_{u=1}^3\nu^\infty_{F_u^{ij}}(z)-1.$$
We have 
\begin{align*}
\nu_{\Phi^{\alpha}_{ij}}(z)&\geqslant\min\{\nu_{F_1^{ij}}(z)-1,\nu_{F_2^{ij}}(z)-1,\nu_{F_3^{ij}}(z)-1\}\geqslant -\sum_{u=1}^3\nu^\infty_{F_u^{ij}}(z)-1\geqslant P(z).
\end{align*}

\vskip0.2cm
\noindent
\textit{Case 5:} $z\in T_j$.  We may assume that 
$$\nu_{F_1^{ji}}(z)=d_1\geqslant \nu_{F_2^{ji}}(z)=d_2\geqslant\nu_{F_3^{ji}}(z)=d_3.$$
 Choose a holomorphic function $h$ on $\C^m$ with multiplicity $1$ at $z$ such that
$F_u^{ji}=h^{d_u}\varphi_u\ (1\leqslant u\leqslant 3),$ where $\varphi_u$ are meromorphic on $\C^m$ and 
holomorphic on a neighborhood of $z$. Then
\begin{align*}
 \Phi^{\alpha}_{ij}&=F_1^{ij}\cdot F_2^{ij}\cdot F_3^{ij}\cdot 
\left | 
\begin {array}{ccc}
F_2^{ji}-F_1^{ji}&F_3^{ji}-F_1^{ji}\\
\mathcal {D}^{\alpha}(F_2^{ji}-F_1^{ji}) &\mathcal {D}^{\alpha}(F_3^{ji}-F_1^{ji})\\
\end {array}
\right|\\
&= F_1^{ij}\cdot F_2^{ij}\cdot F_3^{ij}\cdot h^{d_2+d_3}\cdot 
\left | 
\begin {array}{ccc}
\varphi_2-h^{d_1-d_2}\varphi_1&\varphi_3-h^{d_1-d_3}\varphi_1\\
\dfrac{\mathcal {D}^{\alpha}(h^{d_2-d_3}\varphi_2-h^{d_1-d_3}\varphi_1)}{h^{d_2-d_3}} &\mathcal {D}^{\alpha}(\varphi_3-h^{d_1-d_3}\varphi_1)\\
\end {array}
\right|.
\end{align*}
This yields that
\begin{align*}
\nu_{\Phi^\alpha_{ij}}(z)&\geqslant\sum_{u=1}^3 \nu_{F_u^{ij}}(z)+d_2+d_3-\max\{0,\min\{1,d_2-d_3\}\}.
\end{align*}
If $z\not\in S_i$ then 
\begin{align*}
P(z)&=-\sum_{u=1}^3 \nu^{\infty}_{F_u^{ij}}(z)+\sum_{u=1}^3\min\{n,d_u\}-(n+1)+\chi_{\nu_j},
\end{align*}
 and
\begin{align*}
\nu_{\Phi^\alpha_{ij}}(z)&\geqslant -\sum_{u=1}^3 \nu^{\infty}_{F_u^{ij}}(z)+\sum_{u=1}^3\nu^0_{F_u^{ij}}(z)+d_2+d_3-1+\chi_{\nu_j}\\
&\geqslant -\sum_{u=1}^3 \nu^{\infty}_{F_u^{ij}}(z)+d_2+d_3-1+\chi_{\nu_j}\geqslant P(z).
\end{align*}
Otherwise, if $z\in S_i$ then $z\in R_i$, and hence 
$$P(z)\leqslant  \sum_{u=1}^3\nu^{(n)}_{(f^u,H_j),\leqslant k_j}-\sum_{u=1}^3 \nu^{\infty}_{F_u^{ij}}(z)-3n-1+\chi_{\nu_j}\leqslant -\sum_{u=1}^3 \nu^{\infty}_{F_u^{ij}}(z)-3n,$$
\begin{align*}
\text{and \ \ }\nu_{\Phi^\alpha_{ij}}(z)&\geqslant -\sum_{u=1}^3 \nu^{\infty}_{F_u^{ij}}(z)+\sum_{u=1}^3\nu^0_{F_u^{ij}}(z)+d_2+d_3-1\\
&\geqslant -\sum_{u=1}^3 \nu^{\infty}_{F_u^{ij}}(z)+\max\{0,-d_1\}+\max\{d_2,0\}+\max\{d_3,0\}-1\geqslant P(z).
\end{align*}

\vskip0.2cm
\noindent
\textit{Case 6:} $z\in (S_i\cup S_j)\setminus (\bigcup_{t=1}^{2n+2} T_t)$. Similarly as Case 5, we have
\begin{align*}
\nu_{\Phi^\alpha_{ij}}(z)&\geqslant -\sum_{u=1}^3 \nu^{\infty}_{F_u^{ij}}(z)+\max\{0,-d_1\}+\max\{d_2,0\}+\max\{d_3,0\}-1\\
&\ge-\sum_{u=1}^3 \nu^{\infty}_{F_u^{ij}}(z)-1\geqslant -\sum_{u=1}^3 \nu^{\infty}_{F_u^{ij}}(z)-\chi_{S_i}-\chi_{S_j}\geqslant P(z).
\end{align*}

From the above six cases, we see that the inequality (\ref{4.11}) holds. Hence the lemma is proved \hfill$\square$

\begin{proof}[ Proof of theorem \ref{1.2}]  Suppose that there exits three distinct meromorphic mappings $f^1,f^2,f^3$ in $\mathcal F(f,\{H_i,k_i\}_{i=1}^{2n+2},1)$.  By Lemma \ref{1.1}, we have $f^1\wedge f^2\wedge f^3\equiv 0$. Without loss of generality, we may assume that

$\underbrace{V_1\cong \cdots\cong V_{l_1}}_{\text { group } 1}\not\cong
\underbrace{V_{l_1+1}\cong\cdots\cong V_{l_2}}_{\text { group } 2}\not\equiv \underbrace{V_{l_2+1}\cong \cdots\cong V_{l_3}}_{\text { group } 3}\not\cong\cdots \not\cong\underbrace{V_{l_s+1}\cong\cdots\cong V_{l_{s+1}}}_{\text { group } s},$\\
where $l_s=2n+2.$ 

Denote by $P$ the set of all $i\in\{1,\ldots ,2n+2\}$ satisfying there exist $j\in\{1,\ldots ,2n+2\}\setminus\{i\}$ such that $V_i\not\cong V_j$ and $\Phi^{\alpha}_{ij}\equiv 0$ for all $\alpha\in\Z^m_+$ with $|\alpha|\leqslant 1.$ We consider the following three cases.

\vskip0.2cm
\noindent
\textit{Case 1:} $\sharp P\geqslant 2$. Then $P$ contains two elements $i,j$. Then we have $\Phi^{\alpha}_{ij}=\Phi^{\alpha}_{ji}=0$ for all $\alpha\in\Z^m_+$ with $|\alpha|\leqslant 1.$ By Lemma \ref{4.3}, there exist two functions, for instance they are $F_1^{ij}$ and $F^2_{ij}$, and a constant $\lambda$ such that $F_1^{ij}=\lambda F_2^{ij}$. This yields that $F_1^{ij}=F^{ij}_2$ (by Lemma \ref{4.1}). Then by Lemma \ref{4.4} (ii), we easily see that $V_i\cong V_j$, i.e., $V_i$ and $V_j$ belong to the same group in the above partition. 

Without loss of generality, we may assum that $i=1$ and $j=2$. Since $f^1,f^2,f^3$ are supposed to be distinct, the number of each group in the above partition is less than $n+1$. Hence we have $V_1\cong V_2\not\cong V_t$ for all $t\in\{n+1,\ldots ,2n+2\}$. Then by Lemma \ref{4.4} (ii), we have
\begin{align*}
 N^{(1)}_{(f,H_i),\leqslant k_1}(r)+N^{(1)}_{(f,H_t),\leqslant k_t}(r)&\geqslant\sum_{s\ne 1,t}N^{(1)}_{(f,H_s),\leqslant k_s}(r)-2\sum_{u=1}^3\sum_{s=1,t}N^{(1)}_{(f^u,H_s),>k_s}(r),\\
 \text{and }N^{(1)}_{(f,H_2),\leqslant k_2}(r)+N^{(1)}_{(f,H_t),\leqslant k_t}(r)&\geqslant\sum_{s\ne 2,t}N^{(1)}_{(f,H_s),\leqslant k_s}(r)-2\sum_{u=1}^3\sum_{s=2,t}N^{(1)}_{(f^u,H_s),>k_s}(r).
\end{align*}
Summing-up both sides of the above two inequalities, we get
\begin{align*}
2N^{(1)}_{(f,H_t),\leqslant k_t}(r)\ge&2\sum_{s\ne 1,2,t}N^{(1)}_{(f,H_s),\leqslant k_s}(r)-2\sum_{u=1}^3(N^{(1)}_{(f^u,H_1),>k_1}(r)+N^{(1)}_{(f^u,H_2),>k_2}(r)\\
&+2N^{(1)}_{(f^u,H_t),>k_t}(r)).
\end{align*}
After summing-up both sides of the above inequalities over all $t\in\{n+1,2n+2\}$, we easily obtain
\begin{align*}
||\ \sum_{u=1}^3((n+2)&(N^{(1)}_{(f^u,H_1),>k_1}(r)+N^{(1)}_{(f^u,H_2),>k_2}(r))+2\sum_{t=n+1}^{2n+2}N^{(1)}_{(f^u,H_t),>k_t}(r))\\
&\geqslant (n+2)\sum_{t=3}^nN^{(1)}_{(f,H_t),\leqslant k_t}(r)+n\sum_{t=n+1}^{2n+2}N^{(1)}_{(f,H_t),\leqslant k_t}(r)\\
&\geqslant n\sum_{t=3}^{2n+2}N^{(1)}_{(f,H_t),\leqslant k_t}(r)\geqslant \dfrac{n}{3}\sum_{u=1}^3\sum_{t=3}^{2n+2}N^{(1)}_{(f_u,H_t),\leqslant k_t}(r)\\
&\geqslant\dfrac{n}{3}\sum_{u=1}^3\sum_{t=3}^{2n+2}N^{(1)}_{(f_u,H_t)}(r)-\dfrac{n}{3}\sum_{u=1}^3\sum_{t=3}^{2n+2}N^{(1)}_{(f^u,H_t),>k_t}(r)\\
&\geqslant\dfrac{1}{3}\sum_{u=1}^3\sum_{t=3}^{2n+2}N^{(n)}_{(f_u,H_t)}(r)-\dfrac{n}{3}\sum_{u=1}^3\sum_{t=3}^{2n+2}N^{(1)}_{(f^u,H_t),>k_t}(r)\\
&\geqslant\dfrac{n-1}{3}T(r)-\dfrac{n}{3}\sum_{u=1}^3\sum_{t=3}^{2n+2}N^{(1)}_{(f^u,H_t),>k_t}(r)+o(T(r)).
\end{align*}
Therefore, we have
\begin{align*}
\dfrac{n-1}{3}T(r)&\leqslant (n+2)\sum_{u=1}^3\sum_{t=1}^{2n+2}N^{(1)}_{(f^u,H_t),>k_t}(r)\leqslant (n+2)\sum_{u=1}^3\sum_{t=1}^{2n+2}\dfrac{1}{k_t+1}N_{(f^u,H_t),>k_t}(r)\\
&\leqslant (n+2)\sum_{t=1}^{2n+2}\dfrac{1}{k_t+1}T(r). 
\end{align*}
Letting $r\longrightarrow +\infty$, we get
$$  \dfrac{n-1}{3(n+2)}\leqslant \sum_{t=1}^{2n+2}\dfrac{1}{k_t+1}.$$
This is a contradiction.

\vskip0.2cm 
\noindent
\textbf{\textit{Case 2:}} $\sharp P=1$. We assume that $P=\{1\}$. We easily see that $V_{1}\not\cong V_i$ for all $i=2,\ldots ,2n+2$ (otherwise $i\in P$, this contradict to $\sharp P=1$). Then by Lemma \ref{4.4} (ii), we have
\begin{align*}
N^{(1)}_{(f,H_1),\leqslant k_1}(r)\geqslant\sum_{s\ne 1,i}N^{(1)}_{(f,H_s),\leqslant k_s}(r)-N^{(1)}_{(f,H_i),\leqslant k_i}(r)-2\sum_{u=1}^3\sum_{s=1,i}N^{(1)}_{(f^u,H_s),>k_s}(r)+o(T(r)).
\end{align*}
Summing-up both sides of the above inequality over all $i=2,\ldots ,2n+2$, we get
\begin{align}\label{4.12}
\begin{split}
(2n+1)N^{(1)}_{(f,H_1),\leqslant k_1}(r)\ge& (2n-1)\sum_{i=2}^{2n+2}N^{(1)}_{(f,H_i),\leqslant k_i}(r)-2\sum_{u=1}^3\sum_{i=2}^{2n+2}N^{(1)}_{(f^u,H_i),>k_s}(r)\\
&-2(2n+1)\sum_{u=1}^3N^{(1)}_{(f^u,H_1),>k_1}(r)+o(T(r)).
\end{split}
\end{align}
We also see that $i\not\in P$ for all $2\leqslant i\leqslant 2n+2$. We set
$$ \sigma (i)=\begin{cases}
i+n&\text{ if }i\leqslant n+2,\\
i-n&\text{ if } n+2<i\leqslant 2n+2.
\end{cases}$$
Then we easily see that $i$ and $\sigma (i)$ belong to two distinct groups, i.e, $V_i\not\cong V_{\sigma (i)}$, for all $i\in\{2,\ldots ,2n+2\}$, and hence $\Phi^{\alpha}_{i\sigma (i)}\not\equiv 0$ for all $\alpha\in\Z^m_+$ with $|\alpha|\leqslant 1$. By Lemma \ref{4.5} we have
\begin{align*}
T(r)\ge& \sum_{u=1}^{3}\sum_{t=i,\sigma (i)}N^{(n)}_{(f^u,H_t),\leqslant k_t}(r)-(2n+1)N^{(1)}_{(f,H_i),\leqslant k_i}(r)-(n+1)N^{(1)}_{(f,H_{\sigma (i)}),\leqslant k_{\sigma (i)}}(r)\\
&+2\sum_{\overset{t=1}{t\ne i,{\sigma (i)}}}^{2n+2}N_{(f,H_t),\leqslant k_t}^{(1)}(r)-\sum_{u=1}^{3}\biggl (\dfrac{2n+1}{3}N^{(1)}_{(f^u,H_{i}),>k_{i}}(r)+\dfrac{n+2}{3}N^{(1)}_{(f^u,H_{\sigma (i)}),>k_{\sigma (i)}}\biggl )\\
&+o(T(r)).
\end{align*}
Summing-up both sides of the above inequalities over all $i\in\{2,\ldots ,2n+2\}$, we get
\begin{align*}
(2n+1)&T(r)\geqslant 2\sum_{i=2}^{2n+2}\sum_{u=1}^{3}N^{(n)}_{(f^u,H_i),\leqslant k_i}(r)+(n-4)\sum_{i=2}^{2n+2}N^{(1)}_{(f,H_i),\leqslant k_i}(r)\\
&\ \ \ \ \ \ +2(2n+1)N^{(1)}_{(f,H_1),\leqslant k_1}(r)-(n+1)\sum_{u=1}^3\sum_{i=2}^{2n+2}N^{(1)}_{(f^u,H_{i}),>k_{i}}+o(T(r))\\
&\geqslant 2\sum_{i=2}^{2n+2}\sum_{u=1}^{3}N^{(n)}_{(f^u,H_i),\leqslant k_i}(r)+\dfrac{5n-6}{3}\sum_{u=1}^3\sum_{i=2}^{2n+2}N^{(1)}_{(f^u,H_i),\leqslant k_i}(r)\\
&-(8n+4)\sum_{u=1}^3N^{(1)}_{(f^u,H_1),> k_1}(r)-(n+5)\sum_{u=1}^3\sum_{i=2}^{2n+2}N^{(1)}_{(f^u,H_{i}),>k_{i}}+o(T(r))+o(T(r))\\
&\geqslant \dfrac{11n-6}{3n}\sum_{u=1}^{3}\sum_{i=1}^{2n+2}N^{(n)}_{(f^u,H_i),\leqslant k_i}(r)
\end{align*}

\begin{align*}
&-\dfrac{4n+2}{3}\sum_{u=1}^3N^{(1)}_{(f^u,H_1),> k_1}(r)-(n+1)\sum_{u=1}^3\sum_{i=2}^{2n+2}N^{(1)}_{(f^u,H_{i}),>k_{i}}+o(T(r))+o(T(r))\\
&\geqslant \dfrac{11n-6}{3n}\sum_{u=1}^{3}\sum_{i=2}^{2n+2}N^{(n)}_{(f^u,H_i)}(r)\\
&-(8n+4)\sum_{u=1}^3N^{(1)}_{(f^u,H_1),> k_1}(r)-\dfrac{14n+3}{3}\sum_{u=1}^3\sum_{i=2}^{2n+2}N^{(1)}_{(f^u,H_{i}),>k_{i}}+o(T(r))+o(T(r))\\
&\geqslant \dfrac{11n-6}{3}T(r)-(8n+4)\sum_{i=1}^{2n+2}\dfrac{1}{k_i+1}T(r)+o(T(r)).
\end{align*}
Letting $r\longrightarrow +\infty$, we get
\begin{align*}
\dfrac{5n-9}{24n+12}\leqslant \sum_{i=1}^{2n+2}\dfrac{1}{k_i+1}.
\end{align*}
This is a contradiction. 

\vskip0.2cm
\noindent
\textbf{\textit{Case 3:}} $P=\emptyset$. Then for all $i\ne j$, by Lemma \ref{4.5} we have
\begin{align*}
T(r)&\geqslant \sum_{u=1}^{3}N^{(n)}_{(f^u,H_i),\leqslant k_i}(r)+\sum_{k=1}^3 N^{(n)}_{(f^k,H_j),\leqslant k_j}(r)+2\sum_{\overset{t=1}{t\ne i,j}}^{2n+2}N_{(f,H_t),\leqslant k_t}^{(1)}(r) \\
&\ \ -(2n+1)N^{(1)}_{(f,H_i),\leqslant k_i}(r)-(n+1)N^{(1)}_{(f,H_j),\leqslant k_j}(r)+N(r,\nu_j)\\
&\ \ -\sum_{u=1}^3\bigl ((1+\dfrac{n-1}{3})N^{(1)}_{(f^u,H_j),> k_j}(r)+(1+\dfrac{2n-2}{3})N^{(1)}_{(f^u,H_i),>k_i}(r)\bigl )+o(T(r)).
\end{align*}
Summing-up both sides of the above inequalities over all pairs $(i,j)$ we get
\begin{align}\notag
(2n+2)T(r)\geqslant &2\sum_{u=1}^3\sum_{t=1}^{2n+2}N^{(n)}_{(f^u,H_t),\leqslant k_t}(r)+(n-2)\sum_{t=1}^{2n+2}N^{(1)}_{(f,H_t),\leqslant k_t}(r)+\sum_{t=1}^{2n+2}N(r,\nu_t)\\
& -(n+1)\sum_{u=1}^3\sum_{t=1}^{2n+2}N_{(f^u,H_i),> k_i}+o(T(r)).
\label{4.13}
\end{align}

On the other hand, by Lemma \ref{4.2}, we see that $V_j\not\sim V_l$ for all $j\ne l$. Hence, we have
$$P_{st}^{jl}:\overset{Def}=(f^s,H_j)(f^t,H_l)-(f^t,H_l)(f^s,H_j)\not\equiv 0\ (s\ne t, j\ne l).$$
\begin{claim}
With $i\ne j\ne l\ne i$, for every $z\in T_i$ we have 
$$\sum_{1\leqslant s<t\leqslant 3}\nu_{P_{st}^{jl}}(z)\geqslant 4\chi_{T_i}(z)-\chi_{\nu_i}(z).$$ 
\end{claim}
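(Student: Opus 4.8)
My plan is to work at a generic $z\in T_i$, that is, at $z\in T_i$ lying outside an analytic set of codimension $\geqslant 2$ (singularities together with the loci $T_s\cap T_t$, $s\neq t$), which is all that matters for the associated counting functions. For $1\leqslant u\leqslant 3$ put $F_u^{jl}=(f^u,H_j)/(f^u,H_l)$ and $F_u^{il}=(f^u,H_i)/(f^u,H_l)$, and set $p_{st}=\nu_{F_s^{jl}-F_t^{jl}}(z)$, $q_{st}=\nu_{F_s^{il}-F_t^{il}}(z)$, $\mu_u=\nu_{(f^u,H_i)}(z)$. At such $z$ the functions $(f^u,H_j),(f^u,H_l)$ are units (since $i\neq j,l$), so from $P_{st}^{jl}=(f^s,H_l)(f^t,H_l)(F_s^{jl}-F_t^{jl})$ I get $\nu_{P_{st}^{jl}}(z)=p_{st}$, and likewise $\nu_{F_u^{il}}(z)=\mu_u$. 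Moreover condition (a) forces $1\leqslant\mu_u\leqslant k_i$, while condition (b) gives $f^1=f^2=f^3$ on $T_i$, whence $F_s^{jl}=F_t^{jl}$ and $F_s^{il}=F_t^{il}$ on $T_i$, so $p_{st}\geqslant 1$ and $q_{st}\geqslant 1$. The goal reduces to $p_{12}+p_{13}+p_{23}\geqslant 4-\chi_{\nu_i}(z)$. Two cases are immediate: if $V_j\cong V_l$ then $P_{st}^{jl}\equiv 0$ and the left side is $+\infty$; and if $V_i\cong V_l$ then $(f^u,H_i)=\psi\,(f^u,H_l)$ for a single meromorphic $\psi$, so all $\mu_u=\nu_\psi(z)$ coincide, giving $z\in\nu_i$ and $\sum p_{st}\geqslant 3=4-\chi_{\nu_i}(z)$. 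Hence I may assume $V_j\not\cong V_l$ and $V_i\not\cong V_l$.

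The crucial ingredient is $f^1\wedge f^2\wedge f^3\equiv 0$ (Lemma \ref{1.1}): there are meromorphic $c_1,c_2,c_3$, not all zero, with $\sum_u c_u(f^u,H)\equiv 0$ for every hyperplane $H$. Taking $H=H_l,H_j,H_i$ and dividing by $(f^u,H_l)$, the nonzero vector $d=(c_1(f^1,H_l),c_2(f^2,H_l),c_3(f^3,H_l))$ satisfies $\sum_u d_u=0$, $\sum_u d_uF_u^{jl}=0$ and $\sum_u d_uF_u^{il}=0$. Since $V_j\not\cong V_l$ gives $P_{st}^{jl}\not\equiv 0$, the triple $(F_1^{jl},F_2^{jl},F_3^{jl})$ has pairwise distinct entries, and since $V_i\not\cong V_l$ together with $V_i\not\sim V_l$ (Lemma \ref{4.2}) so does $(F_1^{il},F_2^{il},F_3^{il})$; thus each is independent of $(1,1,1)$, and $d$ is proportional to the cross product of $(1,1,1)$ with either of them. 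Equating the two cross products yields one meromorphic function $\lambda$ with $F_s^{jl}-F_t^{jl}=\lambda\,(F_s^{il}-F_t^{il})$ for all $s,t$, so that $p_{st}=\nu_\lambda(z)+q_{st}$ with $\nu_\lambda(z)$ independent of the pair.

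Finally I would use $\nu_{F_u^{il}}(z)=\mu_u$, which gives $q_{st}\geqslant\min(\mu_s,\mu_t)$ with equality when $\mu_s\neq\mu_t$, and the ultrametric identity $(F_1^{il}-F_2^{il})+(F_2^{il}-F_3^{il})=F_1^{il}-F_3^{il}$, which forces the two smallest of $q_{12},q_{13},q_{23}$ to be equal. Combining $p_{st}\geqslant 1$, i.e. $\nu_\lambda(z)\geqslant 1-q_{st}\geqslant 1-\min_{s<t}q_{st}$, with $p_{st}=\nu_\lambda(z)+q_{st}$ I obtain
$$\sum_{s<t}p_{st}=3\,\nu_\lambda(z)+\sum_{s<t}q_{st}\geqslant 3+\Big(\max_{s<t}q_{st}-\min_{s<t}q_{st}\Big).$$
If $z\in\nu_i$ this already gives $\geqslant 3=4-\chi_{\nu_i}(z)$. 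If $z\notin\nu_i$, the minimum of $\mu_1,\mu_2,\mu_3$ is attained exactly once, say $\mu_a<\mu_b,\mu_c$, so $q_{ab}=q_{ac}=\mu_a$ while $q_{bc}\geqslant\min(\mu_b,\mu_c)>\mu_a$; hence $\max_{s<t}q_{st}-\min_{s<t}q_{st}\geqslant 1$ and $\sum_{s<t}p_{st}\geqslant 4=4-\chi_{\nu_i}(z)$, as desired.

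The part needing most care is the passage to cross products: it is precisely here that $P_{st}^{jl}\not\equiv 0$ and $V_i\not\sim V_l$ are used, guaranteeing that $(F_u^{jl})$ and $(F_u^{il})$ are non-degenerate triples so that $d$ determines each up to scale. I also rely on the elementary but essential observation that any extra cancellation in $F_b^{il}-F_c^{il}$ only raises $q_{bc}$, hence only helps, and on the reduction to generic $z$ so that the relevant $(f^u,H_\bullet)$ are units and the order identities $\nu_{P_{st}^{jl}}(z)=p_{st}$, $\nu_{F_u^{il}}(z)=\mu_u$ hold. The conceptual crux is recognizing that $\nu_\lambda(z)$ is the same for all three pairs, which is what converts three separate first-order contact conditions into the single sharp inequality above.
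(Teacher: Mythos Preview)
Your proof is correct, but it is considerably more elaborate than the paper's. The paper proceeds in two lines once $z\in T_i\setminus\nu_i$: assuming (say) $\mu_1<\mu_2\leqslant\mu_3$, it expands the identically vanishing determinant $\det(V_i,V_j,V_l)$ along the column $V_i$ to obtain
\[
(f^1,H_i)\,P_{23}^{jl}=(f^2,H_i)\,P_{13}^{jl}-(f^3,H_i)\,P_{12}^{jl},
\]
and then a single ultrametric comparison (using $\nu_{P_{st}^{jl}}(z)\geqslant 1$ and $\mu_2\leqslant\mu_3$) gives $\mu_1+\nu_{P_{23}^{jl}}(z)\geqslant\mu_2+1$, hence $\nu_{P_{23}^{jl}}(z)\geqslant 2$ and $\sum_{s<t}\nu_{P_{st}^{jl}}(z)\geqslant 4$. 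The case $z\in\nu_i$ is the trivial bound $\geqslant 3$.

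Your route encodes the same linear dependence differently: you rewrite $f^1\wedge f^2\wedge f^3=0$ as the proportionality of the cross products $(1,1,1)\times(F_u^{jl})$ and $(1,1,1)\times(F_u^{il})$, extract a single meromorphic $\lambda$ with $F_s^{jl}-F_t^{jl}=\lambda(F_s^{il}-F_t^{il})$, and then exploit that $\nu_\lambda(z)$ is the same for all three pairs. This yields the slightly sharper identity $\sum_{s<t}p_{st}\geqslant 3+(\max_{s<t}q_{st}-\min_{s<t}q_{st})$, from which the claim follows. The price is the extra case analysis ($V_j\cong V_l$, $V_i\cong V_l$) and the need to invoke $V_i\not\sim V_l$ so that all $F_s^{il}-F_t^{il}$ are nonzero and $\lambda$ is well defined; in the paper's argument none of this is needed, since the cofactor identity holds unconditionally and one only uses $\nu_{P_{st}^{jl}}(z)\geqslant 1$. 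Your approach does buy a more structural statement (the uniform shift by $\nu_\lambda(z)$), but for the claim as stated the paper's direct cofactor expansion is both shorter and avoids the auxiliary hypotheses.
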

Indeed, for $z\in T_i\setminus \nu_i$, we may assume that $\nu_{(f^1,H_i)}(z)<\nu_{(f^2,H_i)}(z)\leqslant\nu_{(f^3,H_i)}(z)$. Since $f^1\wedge f^2\wedge f^3\equiv 0$, we have $\det (V_i,V_j,V_l)\equiv 0$, and hence
\begin{align*}
(f^1,H_i)P_{23}^{jl}=(f^2,H_i)P_{13}^{jl}-(f^3,H_i)P_{12}^{jl}.
\end{align*}
This yields that 
$$ \nu_{P_{23}^{jl}}(z)\geqslant 2 $$
and hence $\sum_{1\leqslant s<t\leqslant 3}\nu_{P_{st}^{jl}}(z)\geqslant 4=4\chi_{T_i}(z)-\chi_{\nu_i}(z)$ .

Now, for $z\in\nu_i$, we have $\sum_{1\leqslant s<t\leqslant 3}\nu_{P_{st}^{jl}}(z)\geqslant 3=4\chi_{T_i}(z)-\chi_{\nu_i}(z)$. Hence, the claim is proved.

\vskip0.2cm 
On the other hand, with $i=j$ or $i=l$, for every $z\in\{\nu_{(f,H_i),\leqslant k_i}(z)>0\}$ we see that 
\begin{align*}
\nu_{P_{st}^{jl}}(z)\ge&\min\{\nu_{(f^s,H_i),\leqslant k_i}(z),\nu_{(f^t,H_i),\leqslant k_i}(z)\}\\
\ge& \nu^{(n)}_{(f^s,H_i),\leqslant k_i}(z)+\nu^{(n)}_{(f^t,H_i),\leqslant k_i}(z)-n \nu^{(1)}_{(f,H_i),\leqslant k_i}(z).
\end{align*}
$$\text{and hence \ }\sum_{1\leqslant s<t\leqslant 3}\nu_{P_{st}^{jl}}(z)\geqslant 2\sum_{u=1}^3\nu^{(n)}_{(f^u,H_i),\leqslant k_i}(z)-3n\nu^{(1)}_{(f,H_i),\leqslant k_i}(z).$$
Combining this inequality and the above claim, we have
$$\sum_{1\leqslant s<t\leqslant 3}\nu_{P_{st}^{jl}}(z)\geqslant \sum_{i=j,l}(2\sum_{u=1}^3\nu^{(n)}_{(f^u,H_i),\leqslant k_i}(z)-3n\nu^{(1)}_{(f,H_i),\leqslant k_i}(z))+\sum_{i\ne j,l}(4\nu^{(1)}_{(f,H_i),\leqslant k_i}(z)-\chi_{\nu_i}(z)).$$
This yields that
\begin{align}\label{4.17}
\begin{split}
\sum_{1\leqslant s<t\leqslant 3}N_{P_{st}^{jl}}(z)\geqslant &\sum_{i=j,l}(2\sum_{u=1}^3N^{(n)}_{(f^u,H_i),\leqslant k_i}(r)-3nN^{(1)}_{(f,H_i),\leqslant k_i}(r))\\
&+\sum_{i\ne j,l}(4N^{(1)}_{(f,H_i),\leqslant k_i}(r)-N(r,\nu_i)).
\end{split}
\end{align}
On the other hand, be Jensen formula, we easily see that
$$ N_{P_{st}^{jl}}(z)\leqslant T_{f^s}(r)+T_{f^t}(r)+o(T(r))\ (1\leqslant s<t\leqslant 3). $$
Then the inequality (\ref{4.17}) implies that
\begin{align*}
2T(r)\geqslant &\sum_{i=j,l}(2\sum_{u=1}^3N^{(n)}_{(f^u,H_i),\leqslant k_i}(r)-3nN^{(1)}_{(f,H_i),\leqslant k_i}(r))+\sum_{i\ne j,l}(4N^{(1)}_{(f,H_i),\leqslant k_i}(r)-N(r,\nu_i)).
\end{align*}
Summing-up both sides of the above inequalities over all pair $(j,l)$, we obtain
\begin{align*}
2T(r)\ge& \dfrac{2}{n+1}\sum_{u=1}^3\sum_{i=1}^{2n+2}N^{(n)}_{(f^u,H_i),\leqslant k_i}(r)+\dfrac{n}{3\times (n+1)}\sum_{u=1}^3\sum_{i=1}^{2n+2}N^{(1)}_{(f^u,H_i),\leqslant k_i}(r)\\
&-\dfrac{n}{n+1}\sum_{i=1}^{2n+2}N(r,\nu_i)+o(T(r)).
\end{align*}
Thus
\begin{align*}
\sum_{i=1}^{2n+2}N(r,\nu_i)\ge&\dfrac{2}{n}\sum_{u=1}^3\sum_{i=1}^{2n+2}N^{(n)}_{(f^u,H_i),\leqslant k_i}(r)+\dfrac{1}{3}\sum_{u=1}^3\sum_{i=1}^{2n+2}N^{(1)}_{(f^u,H_i),\leqslant k_i}(r)\\
&-\dfrac{2(n+1)}{n}T(r)+o(T(r)).
\end{align*}
Using this estimate, from (\ref{4.13}) we have
\begin{align*}
(2n+2)T(r)\geqslant &(2+\dfrac{2}{n})\sum_{u=1}^3\sum_{t=1}^{2n+2}N^{(n)}_{(f^u,H_t),\leqslant k_t}(r)+\dfrac{n-1}{3}\sum_{u=1}^3\sum_{t=1}^{2n+2}N^{(1)}_{(f_u,H_t),\leqslant k_t}(r)\\
& -\dfrac{2(n+1)}{n}T(r)-(n+1)\sum_{u=1}^3\sum_{t=1}^{2n+2}N_{(f^u,H_i),> k_i}+o(T(r)).\\
&\geqslant (2+\dfrac{2}{n}+\dfrac{n-1}{3n})\sum_{u=1}^3\sum_{t=1}^{2n+2}N^{(n)}_{(f^u,H_t),\leqslant k_t}(r)-\dfrac{2(n+1)}{n}T(r)\\
& -(n+1)\sum_{u=1}^3\sum_{t=1}^{2n+2}N_{(f^u,H_i),> k_i}+o(T(r)).\\
&\geqslant (2+\dfrac{2}{n}+\dfrac{n-1}{3n})\sum_{u=1}^3\sum_{t=1}^{2n+2}N^{(n)}_{(f^u,H_t)}(r)-\dfrac{2(n+1)}{n}T(r)\\
&-(3n+3+\dfrac{n-1}{3})\sum_{u=1}^3\sum_{t=1}^{2n+2}N_{(f^u,H_i),> k_i}+o(T(r)).\\
&\geqslant (2+\dfrac{2}{n}+\dfrac{n-1}{3n})(n+1)T(r)-\dfrac{2(n+1)}{n}T(r)\\
&-(3n+3+\dfrac{n-1}{3})\sum_{i=1}^{2n+2}\dfrac{1}{k_i+1}T(r)+o(T(r)).
\end{align*}
Letting $r\longrightarrow +\infty$, we get
$$ 2n+2\geqslant (2+\dfrac{2}{n}+\dfrac{n-1}{3n})(n+1)-\dfrac{2(n+1)}{n}-(3n+3+\dfrac{n-1}{3})\sum_{i=1}^{2n+2}\dfrac{1}{k_i+1}.$$
Thus
$$ \sum_{i=1}^{2n+2}\dfrac{1}{k_i+1}\geqslant \dfrac{n^2-1}{10n^2+8n}$$
This is a contradiction. 

Hence the supposition is impossible. Therefore, $\sharp\mathcal F(f,\{H_i,k_i\}_{i=1}^{2n+2},1)\leqslant 2$. We complete the proof of the theorem.
\end{proof}

\begin{proof}[Proof of Theorem 1.4]
Let $f^1,f^2,f^3\in\mathcal F(f,\{H_i,k_i\}^{2n+1}_{i=1},p)$. Suppose that $f^1\times f^2\times f^3 :\C^m\rightarrow\P^n(\C)\times \P^n(\C)\times \P^n(\C)$ is linearly nondegenerate, where $\P^n(\C)\times \P^n(\C)\times \P^n(\C)$ is embedded into $\P^{(n+1)^3-1}(\C)$ by Seger imbedding. Then for every $s,t,l$ we have
$$ 
P:=Det\left (\begin{array}{ccc}
(f^1,H_s)&(f^1,H_t)&(f^1,H_l)\\ 
(f^2,H_s)&(f^2,H_t)&(f^2,H_l)\\
(f^3,H_s)&(f^3,H_t)&(f^3,H_l)
\end{array}\right )\not\equiv 0.
$$
By Lemma \ref{new1} we have
\begin{align*}
T(r)\geqslant&\sum_{i=s,t,l}(N(r,\min\{\nu_{(f^u,H_i),\leqslant k_i};1\leqslant u\leqslant 3\})\\
&-N^{(1)}_{(f,H_i),\leqslant k_i}(r))+ 2\sum_{i=1}^{2n+1}N^{(1)}_{(f,H_i),\leqslant k_i}(r)+o(T(r)),
\end{align*}
where $T(r)=\sum_{u=1}^3T_{f^u}(r)$. 
Summing-up both sides of the above inequality over all $(s,t,l)$, we obtain
\begin{align}\notag
T(r)\geqslant\dfrac{1}{2n+1}\sum_{i=1}^{2n+1}&(3N(r,\min\{\nu_{(f^u,H_i),\leqslant k_i};1\leqslant u\leqslant 3\})\\
\label{3.17}
&+(4n-1)N^{(1)}_{(f,H_i),\leqslant k_i}(r))+o(T(r)).
\end{align}
It is easy to see that for positive integers $a,b,c$ with $\min\{a,p\}=\min\{b,p\}=\min\{c,p\}$, we have
$$ 3\min\{a,b,c\}+(4n-1)\geqslant\dfrac{4n-1+3p}{2n+p}(\min\{a,n\}+\min\{b,n\}+\min\{c,n\}). $$
Hence
\begin{align*}
3N(r,\min\{\nu_{(f^u,H_i),\leqslant k_i};1\leqslant u\leqslant 3\})&+(4n-1)N^{(1)}_{(f,H_i),\leqslant k_i}(r)\\
&\geqslant \dfrac{4n-1+3p}{2n+p}\sum_{u=1}^3N^{(n)}_{(f,H_i),\leqslant k_i}(r),\ (1\leqslant i\leqslant 2n+1). 
\end{align*}
Therefore, the inequality (\ref{3.17}) implies that
\begin{align*}
T(r)&\geqslant\dfrac{1}{2n+1}\sum_{i=1}^{2n+1}\dfrac{4n-1+3p}{2n+p}\sum_{u=1}^3N^{(n)}_{(f,H_i),\leqslant k_i}(r)+o(T(r))\\
&\geqslant\dfrac{4n-1+3p}{(2n+1)(2n+p)}\sum_{i=1}^{2n+1}\sum_{u=1}^3(N^{(n)}_{(f,H_i)}(r)-N^{(n)}_{(f,H_i),> k_i}(r))+o(T(r))\\
&\geqslant\dfrac{4n-1+3p}{(2n+1)(2n+p)}(n-\sum_{i=1}^{2n+1}\dfrac{n}{k_i+1})T(r)+o(T(r)).
\end{align*}
Letting $r\longrightarrow +\infty$, we get
$$ 1\geqslant \dfrac{4n-1+3p}{(2n+1)(2n+p)}(n-\sum_{i=1}^{2n+1}\dfrac{n}{k_i+1}),$$
$$ i.e., \sum_{i=1}^{2n+1}\dfrac{1}{k_i+1}\geqslant\dfrac{np-3n-p}{4n^2+3np-n}. $$
This is a contradiction. 

Hence, the map $f^1\times f^2\times f^3$ is linearly degenerate. The theorem is proved.
\end{proof}

\end{document}